\documentclass{gtart}


\usepackage[inner=20mm, outer=20mm, textheight=245mm]{geometry}

\usepackage{psfrag, graphicx, subfigure, epsfig,color}

\usepackage{amsmath, amssymb, latexsym, euscript, amsthm}

\usepackage{mathptmx, wrapfig}

\usepackage{mathrsfs}

\usepackage{enumerate}

\usepackage{booktabs}


\def\bkR{{\rm I\kern-.17em R}}

\def\bkZ{{\rm Z\kern-.26em Z}}

\DeclareMathOperator{\WS}{WS}
\def\PSH{\Sigma^3}




\theoremstyle{plain}
\newtheorem{theorem}{Theorem}
\newtheorem*{theorem*}{Theorem}
\newtheorem{lemma}[theorem]{Lemma}

\newtheorem*{claim*}{Claim}

\theoremstyle{definition}
\newtheorem{definition}[theorem]{Definition}
\newtheorem*{definition*}{Definition}

\theoremstyle{remark}
\newtheorem{remark}[theorem]{Remark}

\numberwithin{equation}{section}


\begin{document}

\title{Unravelling the Dodecahedral Spaces}
\author{Jonathan Spreer and Stephan Tillmann}

\begin{abstract}
The hyperbolic dodecahedral space of Weber and Seifert has a natural non-positively curved cubulation obtained by subdividing the dodecahedron into cubes. We show that the hyperbolic dodecahedral space has a $6$--sheeted irregular cover with the property that the canonical hypersurfaces made up of the mid-cubes give a very short hierarchy. Moreover, we describe a $60$--sheeted cover in which the associated cubulation is special.
We also describe the natural cubulation and covers of the spherical dodecahedral space (aka Poincar\'e homology sphere).
\end{abstract}

\primaryclass{57N10, 57M20, 57N35}

\keywords{Weber-Seifert Dodecahedral space, Poincar\'e homology sphere, hyperbolic 3-manifold, cube complex, fundamental group, low-index subgroup}

\maketitle


\section{Introduction}

A \emph{cubing} of a 3--manifold $M$ is a decomposition of $M$ into Euclidean cubes identified along their faces by Euclidean isometries. This gives $M$ a singular Euclidean metric, with the singular set contained in the union of all edges. The cubing is \emph{non-positively curved} if the dihedral angle along each edge in $M$ is at least $2\pi$ and each vertex satisfies Gromov's \emph{link condition}:
The link of each vertex is a triangulated sphere in which each 1--cycle consists of at least 3 edges, and if a 1--cycle consists of exactly 3 edges, then it bounds a unique triangle. In this case, we say that $M$ has an \emph{NPC cubing}.

The universal cover of an NPC cubed 3--manifold is CAT(0). Aitchison, Matsumoto and Rubinstein~\cite{AMR} showed by a direct construction that if each edge in an NPC cubed 3--manifold has even degree, then the manifold is virtually Haken. Moreover, Aitchison and Rubinstein~\cite{AM-Bull1999} showed that if each edge degree in such a cubing is a multiple of four, then the manifold is virtually fibred. 

A cube contains three canonical squares (or $2$-dimensional cubes), each of which is parallel to two sides of the cube and cuts the cube into equal parts. These are called \emph{mid-cubes}. The collection of all mid-cubes gives an immersed surface in the cubed 3--manifold $M$, called the \emph{canonical (immersed) surface}. If the cubing is NPC, then each connected component of this immersed surface is $\pi_1$--injective. If one could show that one of these surface subgroups is separable in $\pi_1(M),$ then a well-known argument due to Scott~\cite{Scott} shows that there is a finite cover of $M$ containing an embedded $\pi_1$--injective surface, and hence $M$ is virtually Haken. In the case where the cube complex is \emph{special} (see \S\ref{sec:Cube complexes}), a \emph{canonical completion and retraction construction} due to Haglund and Wise~\cite{HW} shows that these surface subgroups are indeed separable because the surfaces are convex. Whence a 3--manifold with a special NPC cubing is virtually Haken.  The missing piece is thus to show that an NPC cubed 3--manifold has a finite cover such that the lifted cubing is special. This is achieved in the case where the fundamental group of the 3--manifold is hyperbolic by the following cornerstone in Agol's proof of Waldhausen's Virtual Haken Conjecture from 1968:

\begin{theorem}[Virtual special; Agol~\cite{Agol}, Thm 1.1] 
Let $G$ be a hyperbolic group which acts properly and cocompactly on a $CAT(0)$ cube complex $X.$ Then $G$ has a finite index subgroup $F$ so that $X/F$ is a special cube complex.
\end{theorem}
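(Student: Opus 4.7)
The goal is to produce a finite-index $F\le G$ so that in $X/F$ every hyperplane is two-sided, embedded, and neither self-osculates nor inter-osculates with a crossing hyperplane. The first two conditions are cheap: passing to the kernel of the action on $H_1(X;\Z/2)$ handles two-sidedness, and a further finite cover (using residual finiteness of $G$, which follows from Wise's work in the cubulated hyperbolic setting) can be arranged to embed each hyperplane. The substantive content is eliminating the osculating configurations, and it is here that the proof becomes a large piece of machinery rather than an elementary argument.

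The plan is to reduce specialness to separability of hyperplane subgroups. Each hyperplane stabilizer $H\le G$ is a quasiconvex subgroup because $G$ is hyperbolic and the action is cocompact (Haglund, Sageev). If every quasiconvex subgroup of $G$ is separable, then the canonical completion and retraction of Haglund--Wise promotes the immersion of hyperplanes to embeddings in a finite cover, and in fact yields the missing non-osculation conditions simultaneously. So the theorem is equivalent to showing that every quasiconvex subgroup of $G$ is separable, and by a standard reduction it suffices to separate double cosets of pairs of hyperplane stabilizers.

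The main engine for separability is Wise's Malnormal Special Quotient Theorem (MSQT): if $G$ is hyperbolic and $\{P_1,\ldots,P_k\}$ is an almost malnormal collection of quasiconvex subgroups, then deep enough Dehn fillings $G \twoheadrightarrow G/\langle\langle N_1,\ldots,N_k\rangle\rangle$ with $N_i \triangleleft P_i$ of finite index yield a hyperbolic group that is virtually compact special. To apply MSQT here one needs an almost malnormal collection of quasiconvex subgroups whose cosets separate the osculating pairs; this is where Agol's key innovation enters. His \emph{weak separation} argument uses a coloring of the hyperplanes of a finite cover by finitely many colors so that the subgroup generated by the stabilizers of any single color is malnormal and quasiconvex. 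One then applies MSQT to this colored collection, lifts the specialness of the filled quotient back along a Rips-type construction, and concludes that the original $G$ has a finite-index subgroup whose hyperplane subgroups are separable.

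The hard part, and the step where I would expect to get stuck without importing heavy machinery, is precisely the construction of the colored malnormal collection and the verification that MSQT can be applied coherently across an entire hierarchy. This requires Agol's theorem on colorings of cube complexes combined with a careful induction on the length of a quasiconvex hierarchy guaranteed by Wise's work on cubulated hyperbolic groups. Everything downstream (passing from separability of hyperplane subgroups to specialness of a finite cover via Haglund--Wise's canonical completion, and from there to the conclusion of the theorem) is comparatively formal; the genuine obstacle is engineering the malnormal quasiconvex collection to which Dehn filling applies, and this is the content that cannot be shortcut.
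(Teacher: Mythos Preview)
This theorem is not proved in the paper at all: it is quoted in the introduction as a result of Agol~\cite{Agol}, with attribution and a citation, and serves only as background motivation for the authors' own explicit construction of a special cover of the Weber--Seifert space. There is therefore no ``paper's own proof'' to compare against. If you were asked to supply a proof for this statement within this paper, the correct response is simply to cite Agol; the authors do exactly that and nothing more.

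As for the sketch you wrote: it is an outline of ideas in the neighbourhood of Agol's argument, but it is not a proof, and you acknowledge as much. A few points of inaccuracy are worth flagging. The reduction of virtual specialness to separability of quasiconvex subgroups (via Haglund--Wise) is correct in spirit. However, the architecture you describe---applying MSQT to a malnormal coloured collection and inducting on the length of a quasiconvex hierarchy---conflates Wise's programme with Agol's actual contribution. Agol's proof does not proceed by induction on a hierarchy; rather, his key innovation is a measure-theoretic colouring argument on the cube complex (invoking the measurable group theory of walls) that produces, in one stroke, a finite cover whose hyperplanes admit a colouring with the required acylindricity properties. The MSQT then enters through Wise's work to finish the job, but the ``engineering of the malnormal quasiconvex collection'' you identify as the hard step is accomplished by Agol's colouring theorem, not by a hierarchy induction. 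So while you have correctly located where the difficulty lies, the mechanism you propose for resolving it is not the one Agol uses.
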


In general, it is known through work of Bergeron and Wise~\cite{BW} that if $M$ is a closed hyperbolic 3--manifold, then $\pi_1(M)$  is isomorphic to the fundamental group of an NPC cube complex. However, the dimension of this cube complex may be arbitrarily large and it may not be a manifold. Agol's theorem provides a finite cover that is a special cube complex, and the $\pi_1$--injective surfaces of Kahn and Markovic~\cite{KM} are quasi-convex and hence have separable fundamental group. Thus, the above outline completes a sketch of the proof that $M$ is virtually Haken. An embedding theorem of Haglund and Wise~\cite{HW} and Agol's virtual fibring criterion~\cite{Agol-RFRS} then imply that $M$ is also virtually fibred.

Weber and Seifert~\cite{WS1933} described two closed 3--manifolds that are obtained by taking a regular dodecahedron in a space of constant curvature and identifying opposite sides by isometries. One is hyperbolic and known as the \emph{Weber-Seifert dodecahedral space} and the other is spherical and known as the \emph{Poincar\'e homology sphere}. Moreover, antipodal identification on the boundary of the dodecahedron yields a third closed $3$-manifold which naturally fits into this family: the real projective space. 

The dodecahedron has a natural decomposition into 20 cubes, which is a NPC cubing in the case of the Weber-Seifert dodecahedral space.
The main result of this note can be stated as follows.

\begin{theorem}
  \label{thm:main}
  The hyperbolic dodecahedral space $\WS$ of Weber and Seifert admits a cover of degree $60$ in which the lifted natural cubulation of $\WS$ is special.
\end{theorem}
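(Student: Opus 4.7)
The plan is to treat $\pi_1(\WS)$ via the presentation arising from the face-pairings of the hyperbolic dodecahedron together with the induced NPC cubulation by $20$ cubes (each cube coning the barycentre of the dodecahedron to a trivalent vertex). Since $\WS$ is hyperbolic, Agol's theorem ensures that some finite index subgroup yields a special cover, so the actual content of Theorem~\ref{thm:main} is the explicit bound of $60$ on the index; the numerology strongly suggests looking for the desired subgroup as (the core of) a homomorphism $\pi_1(\WS) \to G$ with $\abs{G} = 60$, with $A_5$ the natural candidate in view of the rotational symmetry group of the regular dodecahedron.

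Concretely, I would enumerate the subgroups of $\pi_1(\WS)$ of index dividing $60$ using a standard low-index subgroup algorithm, and for each candidate build the corresponding $60$-sheeted cover as an explicit cube complex with $1200$ cubes. In each cover the hyperplanes are the connected components of the preimage of the immersed canonical mid-cube surface; they can be read off by forming the graph on mid-cubes whose edges record the face gluings in the cover. One then verifies the four conditions defining speciality in the sense of \S\ref{sec:Cube complexes}: two-sidedness, embeddedness, absence of direct self-osculation, and absence of inter-osculation of hyperplanes.

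The principal obstacle is the osculation check. Two-sidedness and embeddedness reduce to confirming that transverse co-orientations extend consistently over the cover and that each lifted immersed hyperplane becomes embedded, both of which are direct once the cover is in hand. By contrast, ruling out self- and inter-osculation demands a vertex-by-vertex analysis: at each $0$-cell one must catalogue the hyperplanes passing through its nearby edges, pick consistent co-orientations, and verify that no two edges dual to the same hyperplane — or to a pair of intersecting hyperplanes — meet at a common vertex on the same side. With $1200$ cubes this is impractical by hand and essentially demands a computer-assisted verification. The $6$-sheeted hierarchy cover highlighted in the abstract offers both a guide to selecting the right subgroup (perhaps as a $10$-sheeted further extension) and a useful scaffold for organising the final check.
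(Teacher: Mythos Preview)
Your overall strategy matches the paper's: locate an explicit index-$60$ subgroup of $\pi_1(\WS)$, build the cover, and verify the four speciality conditions by computer. Your guess that the relevant cover sits over the $6$--sheeted hierarchy cover via a further degree-$10$ extension is exactly right: the paper takes the \emph{normal core} of $\pi_1(\mathcal{C})$ (equivalently, of Hempel's index-$5$ subgroup), which has index $60$ and deck group $A_5$, rather than enumerating index-$60$ subgroups directly.

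Where the paper diverges from your plan is in the osculation check. You propose a raw vertex-by-vertex sweep over the $1200$ cubes; the paper instead proves two structural lemmata (Lemmata~\ref{lem:selfosc} and~\ref{lem:interosc}) that push the entire analysis up to the coarser dodecahedral decomposition. The key observation is that every hyperplane is a union of \emph{pentagonal disks} parallel to dodecahedral faces, and any self- or inter-osculation through an edge-midpoint or face-centre forces one through a dodecahedral vertex or centre. This reduces the check to two simple counts per surface (pentagonal disks per dodecahedron, and dodecahedral vertices ``near'' the surface) and one count per intersecting pair, so the computer verification becomes a handful of incidence tallies over $60$ dodecahedra rather than a link-by-link inspection of $1200$ cubes. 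Your approach would work, but the paper's lemmata are what make the verification both tractable and transparent.
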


In addition, we exhibit a $6$--sheeted cover of $\WS$ in which the canonical immersed surface consists of six embedded surface components and thus gives a very short hierarchy of $\widetilde{\WS}.$ The special cover from Theorem~\ref{thm:main} is the smallest regular cover of $\WS$ that is also a cover of this $6$--sheeted cover. Moreover, it is the smallest regular cover of $\WS$ that is also a cover of the $5$-sheeted cover with positive first Betti number described by Hempel~\cite{He}.

We conclude this introduction by giving an outline of this note. The dodecahedral spaces are described in \S\ref{sec:Dodecahedral spaces}. Covers of the hyperbolic dodecahedral space are described in \S\ref{sec:covers of WS}, and all covers of the spherical dodecahedral space and the real projective space in \S\ref{sec:covers of PHS}.

\medskip

\textbf{Acknowledgements:}
Research of the first author was supported by the Einstein Foundation (project “Einstein Visiting Fellow Santos”).
Research of the second author was supported in part under the Australian Research Council's Discovery funding scheme (project number DP160104502). The authors thank Schloss Dagstuhl Leibniz-Zentrum f\"ur Informatik and the organisers of Seminar 17072, where this work was completed. 

The authors thank Daniel Groves and Alan Reid for their encouragement to write up these results, and the anonymous referee for some insightful questions and comments which triggered us to find a special cover.




\section{Cube complexes, injective surfaces and hierarchies}
\label{sec:Cube complexes}

A \emph{cube complex} is a space obtained by gluing Euclidean cubes of edge length one along subcubes. A cube complex is $CAT(0)$ if it is $CAT(0)$ as a metric space, and it is \emph{non-positively curved (NPC)} if its universal cover is $CAT(0).$ Gromov observed that a cube complex is NPC if and only if the link of each vertex is a flag complex.

We identify each $n$--cube as a copy of $[-\frac{1}{2}, \frac{1}{2}]^n$. A mid-cube in $[-\frac{1}{2}, \frac{1}{2}]^n$ is the intersection with a coordinate plane $x_k=0.$ If $X$ is a cube complex, then a new cube complex $Y$ is formed by taking one $(n-1)$--cube for each midcube of $X$ and identifying these $(n-1)$--cubes along faces according to the intersections of faces of the corresponding $n$--cubes. The connected components of $Y$ are the \emph{hyperplanes} of $X,$ and each hyperplane $H$ comes with a canonical immersion $H \to X.$ The image of the immersion is termed an \emph{immersed hyperplane} in $X.$ If $X$ is $CAT(0),$ then each hyperplane is totally geodesic and hence embedded.

The NPC cube complex $X$ is \emph{special} if 
\begin{enumerate}
\item Each immersed hyperplane embeds in $X$ (and hence the term ``immersed" will henceforth be omitted).
\item Each hyperplane is 2--sided.
\item No hyperplane self-osculates.
\item No two hyperplanes inter-osculate.
\end{enumerate}

The prohibited pathologies are shown in Figure~\ref{fig:special} and are explained now. An edge in $X$ is dual to a mid-cube if it intersects the midcube. We say that the edge of $X$ is dual to the hyperplane $H$ if it intersects its image in $X.$ The hyperplane dual to edge $a$ is unique and denoted $H(a).$ Suppose the immersed hyperplane is embedded. It is 2--sided if one can consistently orient all dual edges so that all edges on opposite sides of a square have the same direction. Using this direction on the edges, $H$ \emph{self-osculates} if it is dual to two distinct edges with the same initial or terminal vertex. Hyperplanes $H_1$ and $H_2$ \emph{inter-osculate} if they cross and they have dual edges that share a vertex but do not lie in a common square.

\begin{figure}[htb]
	\begin{center}
 		{\includegraphics[width=0.9\textwidth]{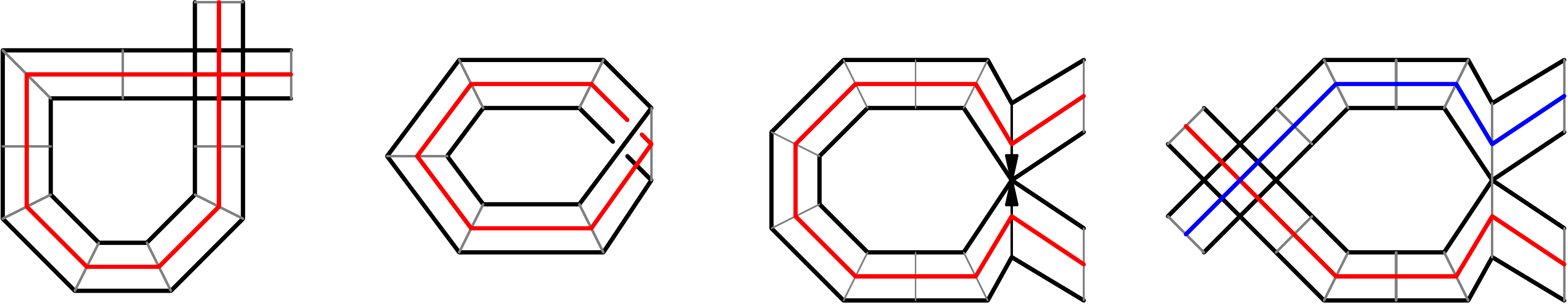}} 
	\end{center}
	\caption{not embedded; 1-sided; self-osculating; inter-osculating (compare to~\cite[Figure 4.2]{Wise})\label{fig:special}}
\end{figure}

The situation is particularly nice in the case where the NPC cube complex $X$ is homeomorphic to a 3--manifold. Work of Aitchison and Rubinstein (see \S3 in \cite{AR1999}) shows that each immersed hyperplane is mapped $\pi_1$--injectively into $X.$ Hence if one hyperplane is embedded and 2--sided, then $X$ is a Haken 3--manifold. 
Moreover, if each hyperplane embeds and is 2--sided, then one obtains a hierarchy for $X.$ This is well-known and implicit in \cite{AR1999}. One may first cut along a maximal union of pairwise disjoint hypersurfaces to obtain a manifold $X_1$ (possibly disconnected) with incompressible boundary. Then each of the remaining hypersurfaces gives a properly embedded surface in $X_1$ that is incompressible and boundary incompressible. This process iterates until one has cut open $X$ along all the mid-cubes, and hence it terminates with a collection of balls. In particular, if $Y$ consists of three pairwise disjoint (not necessarily connected) surfaces, each of which is embedded and 2--sided, then one has a very short hierarchy.


\section{The dodecahedral spaces}
\label{sec:Dodecahedral spaces}

The main topic of this paper is a study of low-degree covers of the hyperbolic dodecahedral space. However, we also take the opportunity to extend this study to the spherical dodecahedral space in the hope that this will be a useful reference. When the sides are viewed combinatorially, there is a third dodecahedral space which naturally fits into this family and again gives a spherical space form: the real projective space. The combinatorics of these spaces is described in this section.


\subsection{The Weber-Seifert Dodecahedral space}

The Weber-Seifert Dodecahedral space $\WS$ is obtained by gluing the opposite faces of a dodecahedron with a $3 \pi / 5$-twist. This yields a decomposition $\mathcal{D}_{\WS}$ of the space into one vertex, six edges, six pentagons, and one cell (see Figure~\ref{fig:dodecahedron} on the left). The dodecahedron can be decomposed into $20$ cubes by a) placing a vertex at the centre of each edge, face, and the dodecahedron, and b) placing each cube around one of the $20$ vertices of the dodecahedron with the other seven vertices in the centres of the three adjacent edges, three adjacent pentagons, and the center of the dodecahedron. Observe that identification of opposite faces of the original dodecahedron with a $3 \pi / 5$-twist yields a $14$-vertex, $54$-edge, $60$ square, $20$-cube decomposition $\hat{\mathcal{D}}_{\WS}$ of $\WS$ (see Figure~\ref{fig:dodecahedron} on the right). Observe that every edge of $\hat{\mathcal{D}}_{\WS}$ occurs in $\geq 4$ cubes, and each vertex satisfies the link condition. We therefore have an NPC cubing.

\begin{figure}[htb]
	\begin{center}
		\raisebox{0.5cm}{\includegraphics[height=7cm]{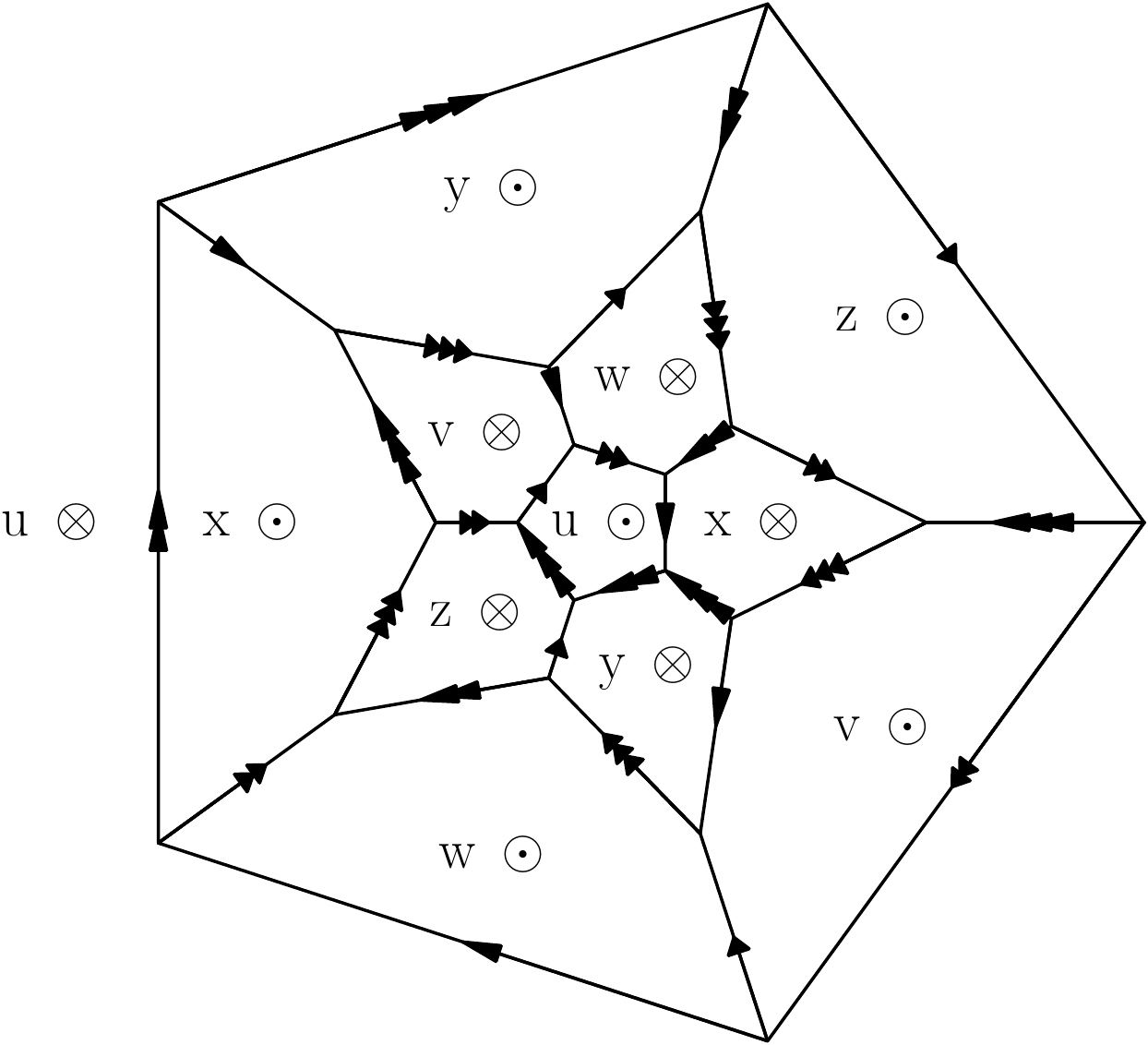}} \hspace{1cm} \includegraphics[height=8cm]{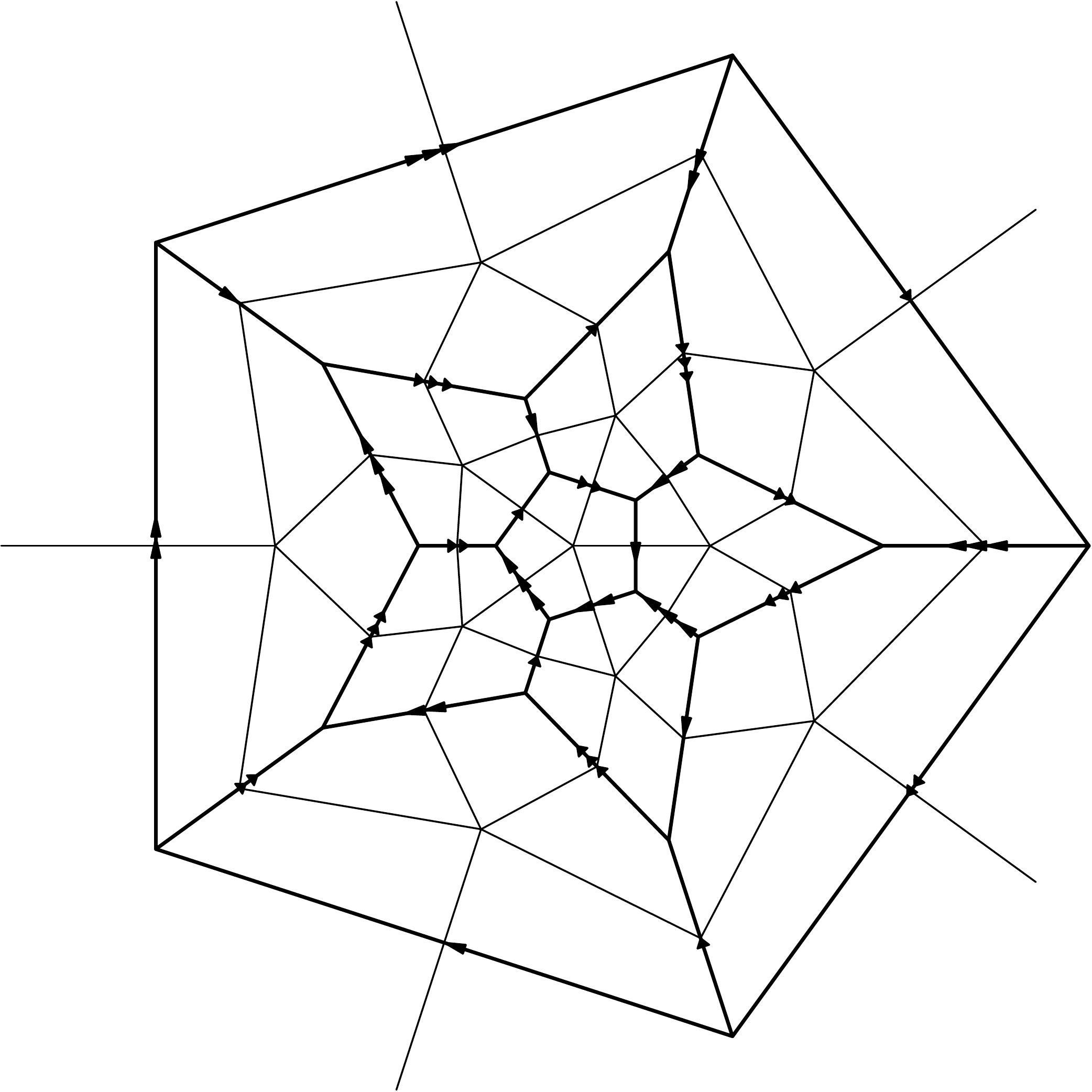}
	\end{center}
	\caption{Left: face and edge-identifications on the dodecahedron yielding the Weber-Seifert dodecahedron space. Right: decomposition of the Weber-Seifert dodecahedral space into $20$ cubes.\label{fig:dodecahedron}}
\end{figure}

\begin{figure}[htb]
	\begin{center}
		\raisebox{2cm}{\includegraphics[height=5cm]{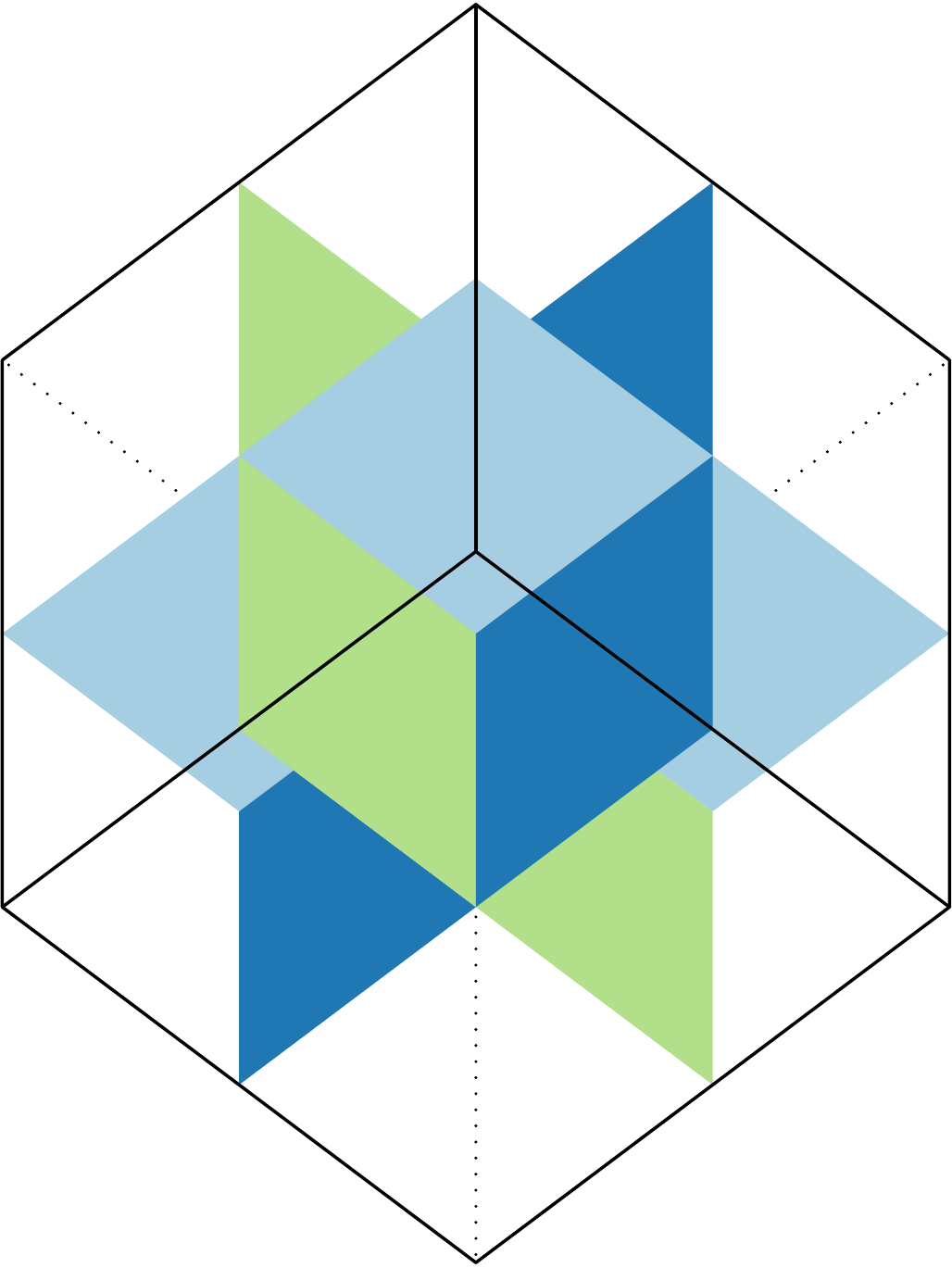}} \hspace{3cm} \includegraphics[height=8cm]{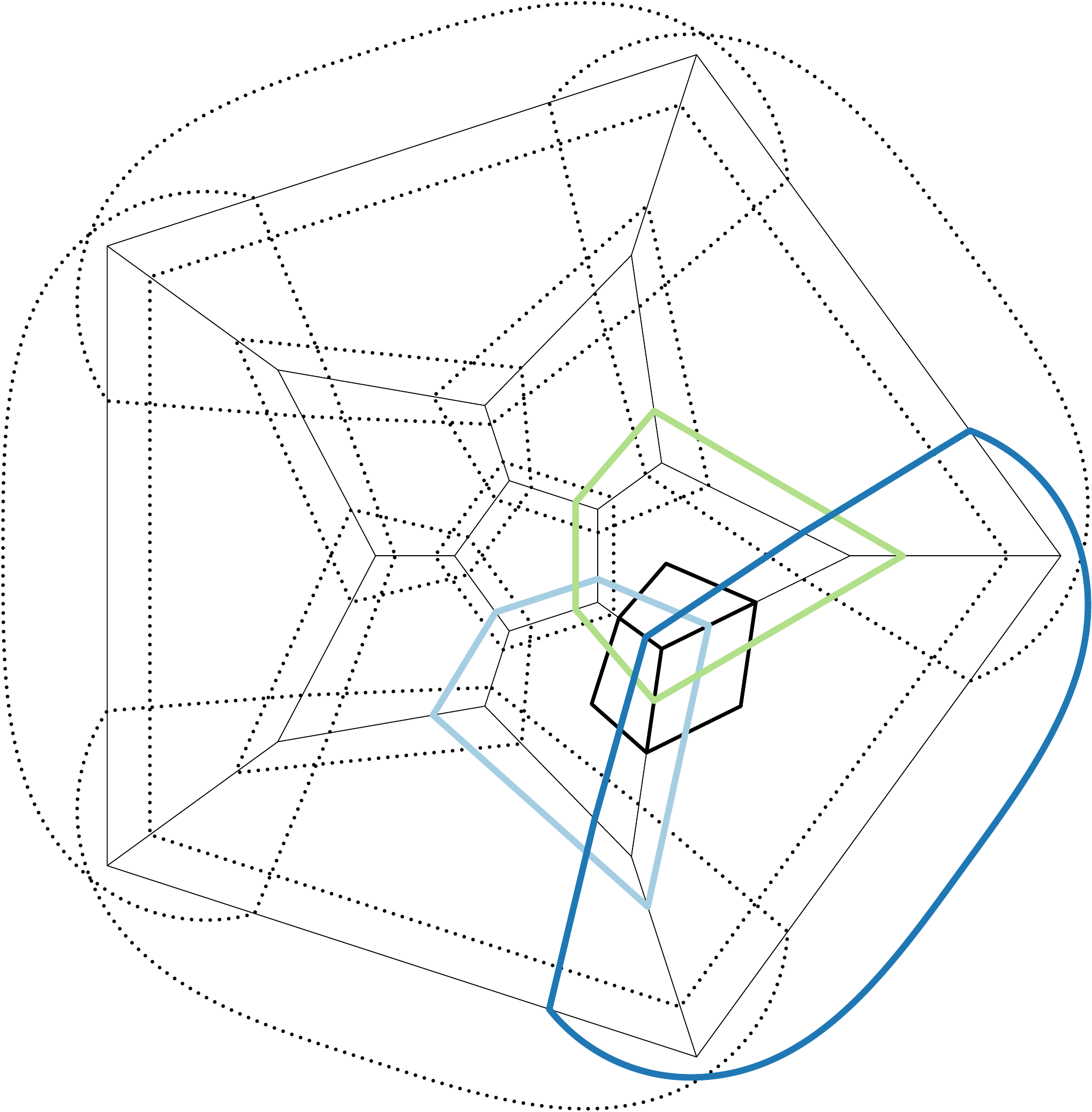}
	\end{center}
	\caption{Left: Immersed canonical surface in one cube. Right: intersection pattern of one cube, the immersed canonical surface, and the boundary of the dodecahedron in $\mathcal{D}_{\WS}$.\label{fig:immersed}}
\end{figure}

The mid-cubes form pentagons parallel to the faces of the dodecahedron, and under the face pairings glue up to give a 2--sided immersed surface of genus four. We wish to construct a cover in which the canonical surface splits into embedded components -- which neither self-osculate with themselves, nor inter-osculate with other surface components.

\subsection{The Poincar\'e homology sphere}

The Poincar\'e homology sphere $\PSH$ is obtained from the dodecahedron by gluing opposite faces by a $ \pi / 5$-twist. This results in a decomposition $\mathcal{D}_{\PSH}$ of $\PSH$ into one vertex, ten edges, six pentagons, and one cell (see Figure~\ref{fig:poincare} on the left). Again, we can decompose $\mathcal{D}_{\PSH}$ into $20$ cubes. Note, however, that in this case some of the cube-edges only have degree three (the ones coming from the edges of the original dodecahedron). This is to be expected since $\PSH$ supports a spherical geometry.

\begin{figure}[htb]
	\begin{center}
		\raisebox{0.5cm}{\includegraphics[height=7cm]{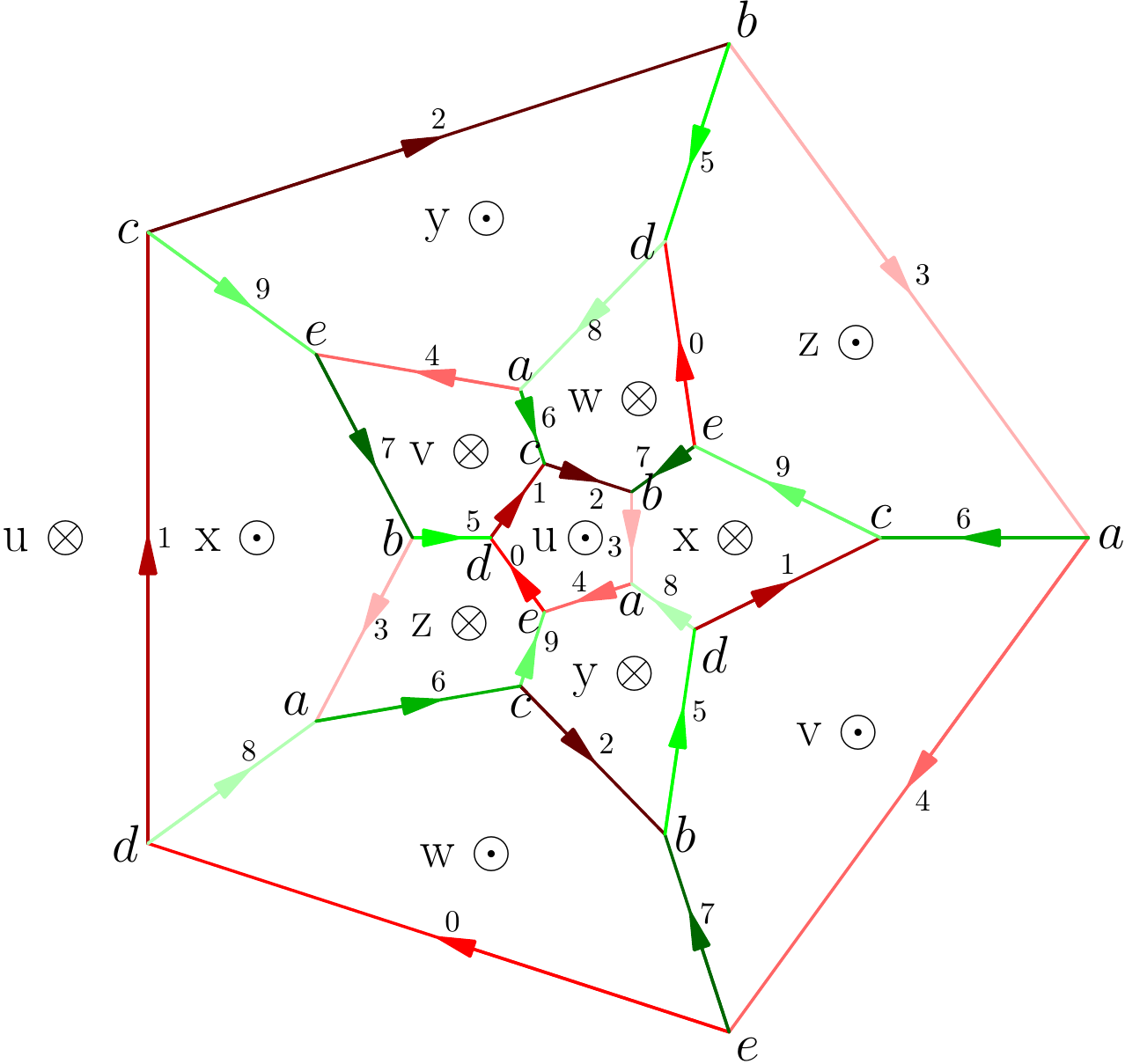}} \hspace{1cm} \includegraphics[height=7cm]{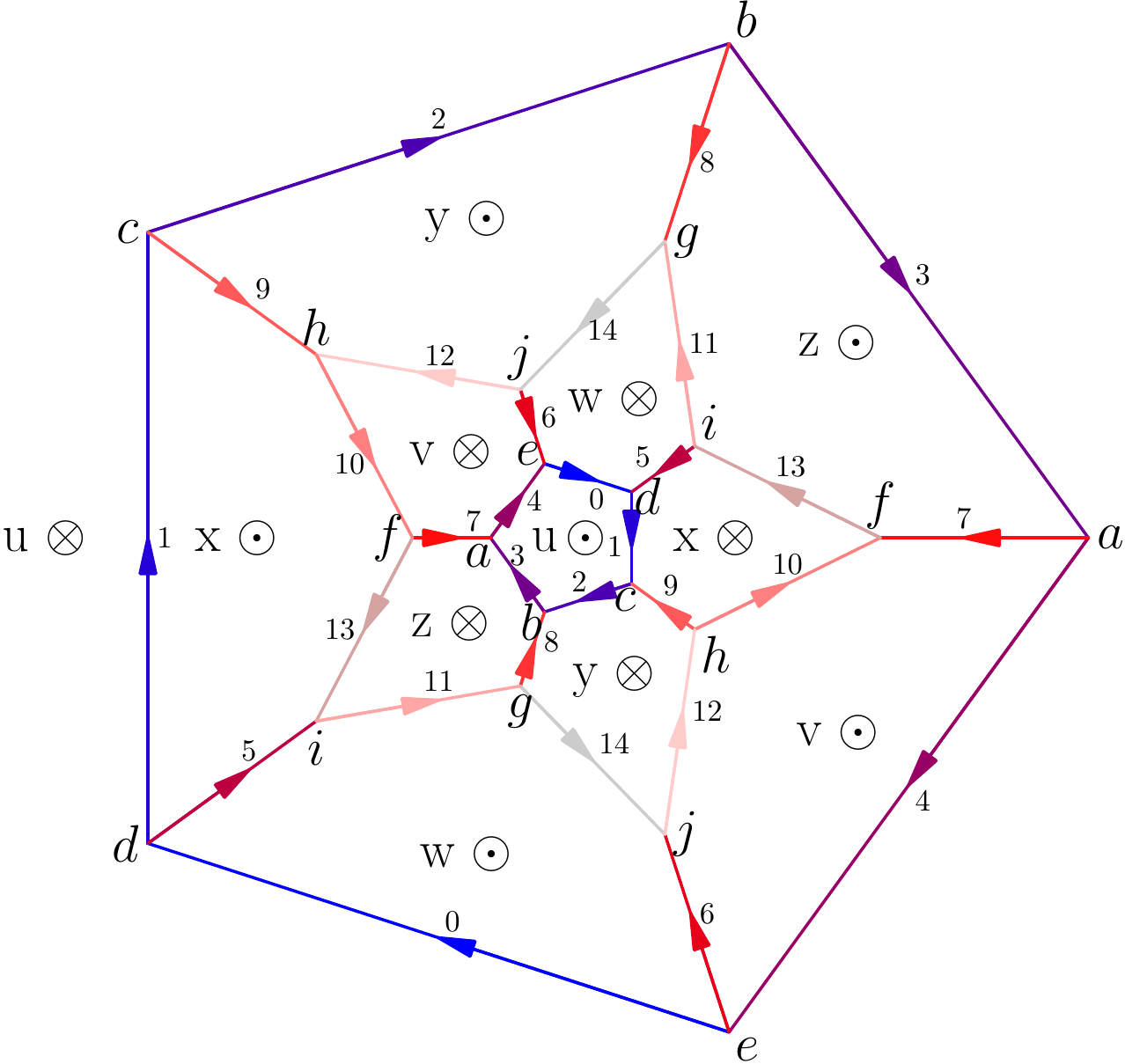}
	\end{center}
	\caption{Left: face and edge-identifications on the dodecahedron yielding the Poincar\'e homology sphere. Right: face and edge-identifications on the dodecahedron yielding the real projective space.\label{fig:poincare}}
\end{figure}

\subsection{Real projective space}

Identifying opposite faces of the dodecahedron by a twist of $ \pi$ results in identifying antipodal points of a $3$-ball (see Figure~\ref{fig:poincare} on the right). Hence, the result is a decomposition $\mathcal{D}_{\mathbb{R}P^3}$of $\mathbb{RP}^3$ into ten vertices, $15$ edges, six faces, and one cell. As in the above cases, this decomposition can be decomposed into $20$ cubes, with some of the cube-edges being of degree two.


\section{Covers of the Weber-Seifert space}
\label{sec:covers of WS}

In order to obtain a complete list of all small covers of the Weber-Seifert space $\WS$, we need a list of all low index subgroups of $\pi_1 (\WS)$ in a presentation compatible with $\mathcal{D}_{\WS}$ and its cube decomposition $\hat{\mathcal{D}}_{\WS}$.

The complex $\mathcal{D}_{\WS}$ has six pentagons $u$, $v$, $w$, $x$, $y$, and $z$. These correspond to antipodal pairs of pentagons in the original dodecahedron, see Figure~\ref{fig:dodecahedron} on the left. Passing to the dual decomposition, these six pentagons corresponds to loops which naturally generate $\pi_1 (\WS)$. The six edges of $\mathcal{D}_{\WS}$
\includegraphics[height=.3cm]{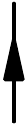},
\includegraphics[height=.3cm]{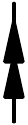},
\includegraphics[height=.3cm]{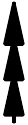},
\includegraphics[height=.3cm]{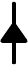},
\includegraphics[height=.3cm]{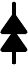}, and
\includegraphics[height=.3cm]{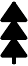}
each give rise to a relator in this presentation of the fundamental group of $\WS$ in the following way: fix edge \includegraphics[height=.3cm]{four} and start at a pentagon containing \includegraphics[height=.3cm]{four}, say $u$. We start at the pentagon labelled $u$ with a back of an arrow $\otimes$ -- the outside in Figure~\ref{fig:dodecahedron} on the left. We traverse the dodecahedron, resurface on the other pentagon labelled $u$ with an arrowhead $\odot$ (the innermost pentagon in Figure~\ref{fig:dodecahedron}). We then continue with the unique pentagon adjacent to the center pentagon along edge \includegraphics[height=.3cm]{four}. In this case $v$ labelled with the tail of an arrow, we traverse the dodecahedron, resurface at $(v,\odot)$, and continue with $(w,\odot)$ which we follow through the dodecahedron in reverse direction, and so on. After five such traversals we end up at the outer face where we started. The relator is now given by the labels of the pentagons we encountered, taking into account their orientation (arrowhead or tail). In this case the relator is $r($\includegraphics[height=.3cm]{four}$)= uvw^{-1}y^{-1}z$.

Altogether we are left with 
$$ \begin{array}{llll} \pi_1 (\WS) = \langle \,\, u,\, v,\, w,\, x,\, y,\, z \, \mid & 
    uxy^{-1}v^{-1}w, & uyz^{-1}w^{-1}x, & uzv^{-1}x^{-1}y, \\
  &uvw^{-1}y^{-1}z, & uwx^{-1}z^{-1}v, & vxzwy \,\, \rangle .
  \end{array} $$

Using this particular representation of the fundamental group of the Weber-Seifert dodecahedral space we compute subgroups of $\pi_1 (\WS)$ of index $k$ ($k < 10$) via \texttt{GAP} function \texttt{LowIndexSubgroupsFpGroup} \cite{GAP}, and \texttt{Magma} function \texttt{LowIndexSubgroups} \cite{Magma} and use their structure to obtain explicit descriptions of their coset actions (using \texttt{GAP} function \texttt{FactorCosetAction} \cite{GAP}) which, in turn, can be transformed into a gluing table of $k$ copies of the dodecahedron (or $20k$ copies of the cube). Given such a particular decomposition, we can track how the canonical surface evolves and whether it splits into embedded components.

We provide a {\em GAP} script for download from \cite{ST}. The script takes a list of subgroups as input (presented each by a list of generators from $\pi_1 (\WS)$) and computes an array of data associated to the corresponding covers of $\mathcal{D}_{\WS}$. The script comes with a sample input file containing all subgroups of $\pi_1 (\WS)$ of index less than ten. The subgroups are presented in a form compatible with the definition of $\pi (\WS)$ discussed above. 


\subsection{Covers of degree up to five}
\label{ssec:five}

A computer search reveals that there are no covers of degrees $2$, $3$, and $4$, and $38$ covers of degree $5$. Their homology groups are listed in Table~\ref{tab:1to9}. For none of them, the canonical surface splits into embedded components. Moreover, in all but one case it does not even split into multiple immersed components, with the exception being the $5$-sheeted cover with positive first Betti number described by Hempel~\cite{He}, where it splits into five immersed components. 


\subsection{Covers of degree six}
\label{ssec:six}

There are $61$ covers of degree six, for $60$ of which the canonical surface does not split into multiple connected components (see Table~\ref{tab:1to9} below for their first homology groups, obtained using \texttt{GAP} function \texttt{AbelianInvariants} \cite{GAP}). However, the single remaining example leads to an irregular cover $\mathcal{C}$ with deck transformation group isomorphic to $\operatorname{A}_5$, for which the canonical surface splits into six embedded components. The cover is thus a Haken cover (although this fact also follows from the first integral homology group of $\mathcal{C}$ which is isomorphic to $\mathbb{Z}^5 \oplus \mathbb{Z}_2^2 \oplus \mathbb{Z}_5^3$, see also Table~\ref{tab:1to9}), and the canonical surface defines a very short hierarchy.

The subgroup is generated by 

$$ \begin{array}{lllll}  
    u, & v^{-1}w^{-1}, & w^{-1}x^{-1}, & x^{-1}y^{-1}, & y^{-1}z^{-1}, \\
    z^{-1}v^{-1}, & vuy^{-1}, & v^2z^{-1}, & vwy^{-1}, & vxv^{-1} 
  \end{array} $$

and the complex is given by gluing six copies $1, 2, \ldots , 6$ of the dodecahedron with the orbits for the six faces as shown in Figure~\ref{fig:fpg} on the left (the orientation of the orbit is given as in Figure~\ref{fig:dodecahedron} on the left). The dual graph of $\mathcal{C}$ (with one vertex for each dodecahedron, one edge for each gluing along a pentagon, and one colour per face class in the base $\mathcal{D}_{\WS}$) is given in Figure~\ref{fig:fpg} on the right.

\begin{figure}[b!]
  \begin{center}
    \begin{tabular}{ll}
	\raisebox{3.5cm}{
	\begin{tabular}{l|l}
        \toprule
	face & orbit \\
	\midrule
	$u$&$(2,5,3,6,4)$ \\
	$v$&$(1,2,6,4,3)$ \\
	$w$&$(1,3,2,5,4)$ \\
	$x$&$(1,4,3,6,5)$ \\
	$y$&$(1,5,4,2,6)$ \\
	$z$&$(1,6,5,3,2)$ \\
        \bottomrule
	\end{tabular} } & \hspace{2cm} \includegraphics[height=7cm]{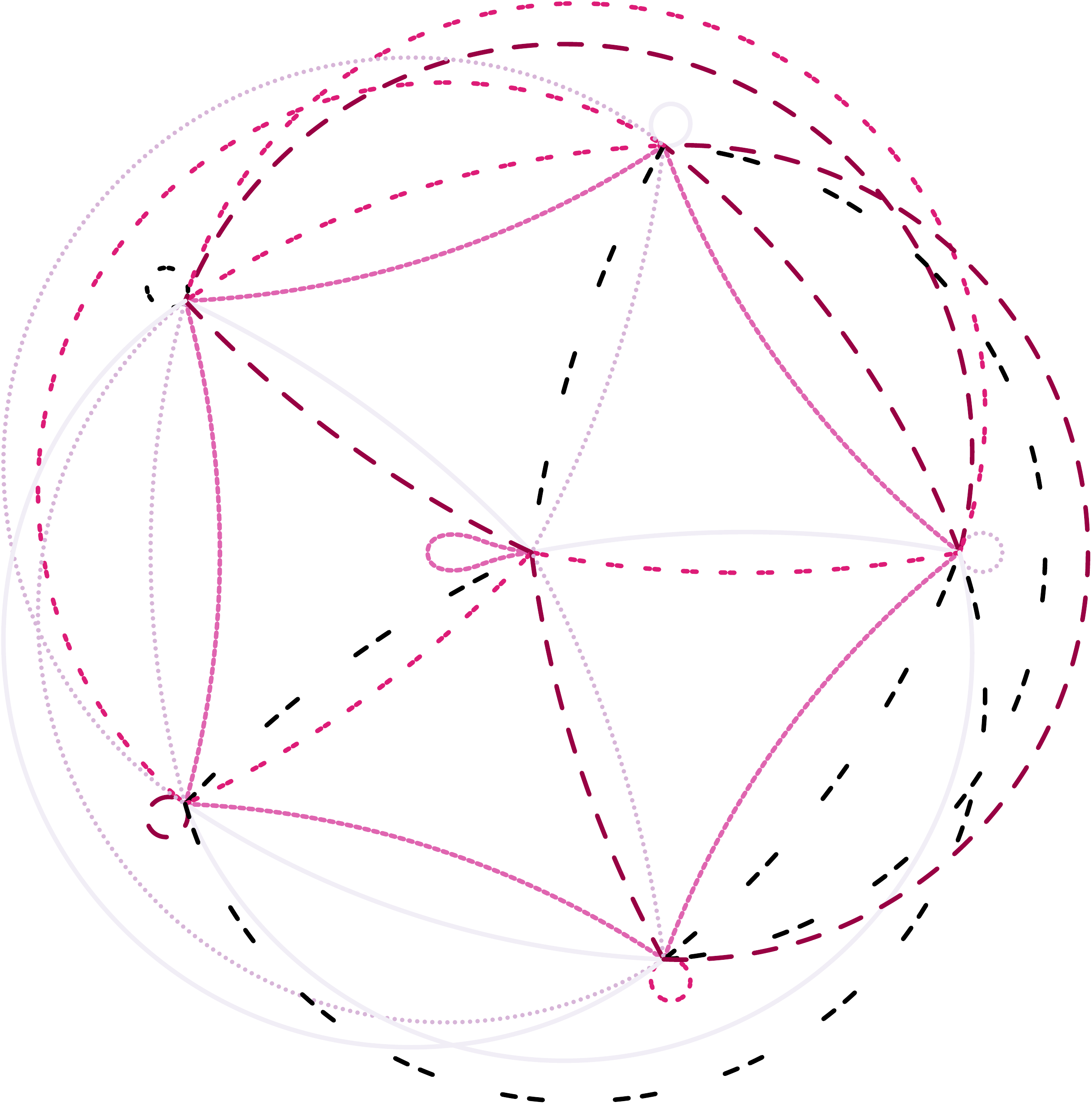}
    \end{tabular}
  \end{center}
	\caption{Left: gluing orbits of face classes from $\mathcal{D}_{\WS}$ in $6$-sheeted Haken cover $\mathcal{C}$. Right: face pairing graph of $\mathcal{C}$. Colours encode face classes in the base $\mathcal{D}_{\WS}$. Note that each dodecahedron has one self-identification and, in particular, that the cover is not cyclic. \label{fig:fpg}}
\end{figure}

The six surfaces consist of 60 mid-cubes each. All surfaces can be decomposed into $12$ ``pentagonal disks'' of five quadrilaterals each, which are parallel to one of the pentagonal faces of the complex, but slightly pushed into one of the adjacent dodecahedra. The six surfaces are given by their pentagonal disks and listed below. Since all of their vertices (which are intersections of the edges of the dodecahedra) must have degree $5$, each surface must have $12$ such vertices, $12$ pentagonal disks, and $30$ edges of pentagonal disks, and thus is of Euler characteristic $-6$. Moreover, since the Weber-Seifert space is orientable and the surface is 2--sided, it must be orientable of genus $4$. 

Every pentagonal disk is denoted by the corresponding pentagonal face it is parallel to, and the index of the dodecahedron it is contained in. The labelling follows Figure~\ref{fig:dodecahedron}.

$$
\begin{array}{llllllll}
S_1&=\Big\langle&(z,\odot)_1,&(y,\otimes)_1,&(v,\odot)_2,&(y,\odot)_2,&(w,\odot)_3,&(w,\otimes)_3, \\
&&(x,\otimes)_4,&(z,\otimes)_4,&(v,\otimes)_5,&(u,\otimes)_5,&(u,\odot)_6,&(x,\odot)_6\quad\Big\rangle
\end{array}
$$
$$
\begin{array}{llllllll}
S_2&=\Big\langle&(w,\otimes)_1,&(x,\odot)_1,&(u,\otimes)_2,&(y,\otimes)_2,&(u,\odot)_3,&(v,\odot)_3,\\
&&(w,\odot)_4,&(y,\odot)_4,&(z,\otimes)_5,&(z,\odot)_5,&(x,\otimes)_6,&(v,\otimes)_6\quad\Big\rangle
\end{array}
$$
$$
\begin{array}{llllllll}
S_3&=\Big\langle&(w,\odot)_1,&(v,\otimes)_1,&(w,\otimes)_2,&(z,\otimes)_2,&(x,\otimes)_3,&(u,\otimes)_3,\\
&&(z,\odot)_4,&(u,\odot)_4,&(x,\odot)_5,&(v,\odot)_5,&(y,\otimes)_6,&(y,\odot)_6\quad\Big\rangle
\end{array}
$$
$$
\begin{array}{llllllll}
S_4&=\Big\langle&(x,\otimes)_1,&(y,\odot)_1,&(w,\odot)_2,&(u,\odot)_2,&(x,\odot)_3,&(z,\odot)_3,\\
&&(v,\odot)_4,&(v,\otimes)_4,&(w,\otimes)_5,&(y,\otimes)_5,&(z,\otimes)_6,&(u,\otimes)_6\quad\Big\rangle
\end{array}
$$
$$
\begin{array}{llllllll}
S_5&=\Big\langle&(u,\otimes)_1,&(u,\odot)_1,&(v,\otimes)_2,&(z,\odot)_2,&(z,\otimes)_3,&(y,\odot)_3,\\
&&(x,\odot)_4,&(y,\otimes)_4,&(x,\otimes)_5,&(w,\odot)]_5,&(w,\otimes)_6,&(v,\odot)_6\quad\Big\rangle
\end{array}
$$
$$
\begin{array}{llllllll}
S_6&=\Big\langle&(v,\odot)_1,&(z,\otimes)_1,&(x,\otimes)_2,&(x,\odot)_2,&(v,\otimes)_3,&(y,\otimes)_3,\\
&&(u,\otimes)_4,&(w,\otimes)_4,&(y,\odot)_5,&(u,\odot)_5,&(w,\odot)_6,&(z,\odot)_6\quad\Big\rangle
\end{array}
$$

Note that the $12$ pentagonal disks of every surface component intersect each dodecahedron exactly twice. (A priori, given a $6$-fold cover of $\mathcal{D}_{\WS}$ with $6$ embedded surface components, such an even distribution is not clear: an embedded surface can intersect a dodecahedron in up to three pentagonal disks.) Moreover, every surface component can be endowed with an orientation, such that all of its dual edges point towards the centre of a dodecahedron. Hence, all surface components must be self-osculating through the centre points of some dodecahedron. 

\begin{remark}
  \label{rem:self}
  The fact that there must be some self-osculating surface components in $\mathcal{C}$ can also be deduced from the fact that the cover features self-identifications (i.e., loops in the face pairing graph). To see this, assume w.l.o.g. that the top and the bottom of a dodecahedron are identified. Then, for instance, pentagonal disk $P_{1}$ (which must be part of some surface component) intersecting the innermost pentagon in edge $\langle v_{1}, v_{2} \rangle$ must also intersect the dodecahedron in pentagon $P_{2}$, and the corresponding surface component must self-osculate (see Figure~\ref{fig:self}).

\begin{figure}[htb]
    \begin{center}
	\includegraphics[height=8cm]{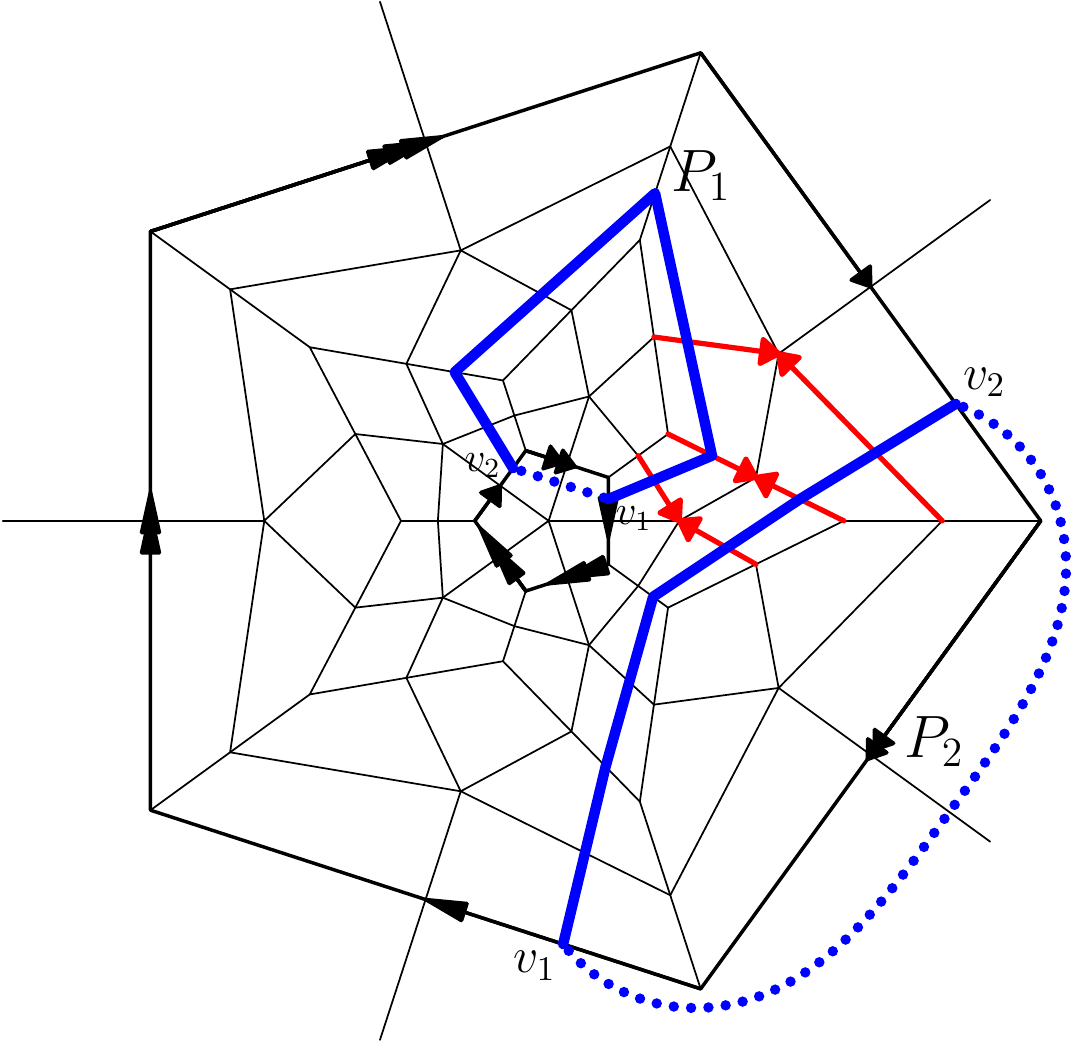}
    \end{center}
    \caption{Self-identifications (as indicated by the arrows) always result in a self-osculating component of the canonical surface -- as indicated by the two pentagonal disks $P_1$ and $P_2$ of the canonical surface, glued along the dotted edge $\langle v_1, v_2 \rangle$ as a result of the self-identification. \label{fig:self}}
\end{figure}
\end{remark}


\subsection{A special cover of degree $60$}
\label{ssec:special}

The (non-trivial) normal cores of the subgroups of index up to $6$ are all of index either $60$ or $360$ in $\pi_1 (\WS)$. 
For the computation of normal cores we use the \texttt{GAP} function \texttt{Core} \cite{GAP}.
One of the index $60$ subgroups is the index $12$ normal core of Hempel's cover mentioned in Section~\ref{ssec:five}.
This is equal to the index $10$ normal core of $\pi_1 (\mathcal{C})$ from Section~\ref{ssec:six}, and we now show that it produces  a special cover $\mathcal{S}$ of $\WS$ of degree $60$. The deck transformation group is the alternating group $\operatorname{A}_5$ and the abelian invariant of the cover is $\mathbb{Z}^{41} \oplus \mathbb{Z}_2^{12}$.

The generators of $\pi_1 (\mathcal{S})$ are

$$
\begin{array}{llllll}
   u v^{-1} w^{-1} , & u w^{-1} x^{-1} , & u x^{-1} y^{-1} , & u y^{-1} z^{-1} , & u z^{-1} v^{-1} , & u^{-1} v z \\
  u^{-1} w v , & u^{-1} x w , & u^{-1} y x , & u^{-1} z y , & v u y^{-1} u^{-1} , & v u^{-1} w \\
   v w y^{-1} , & v x v^{-1} u^{-1} , & v x^{-1} z , & v y^{-1} x^{-1} , & v^{-1} u z^{-1} , & v^{-1} u^{-1} x u \\
  v^{-1} x y , & v^{-1} y^{-1} v u , & w u z^{-1} u^{-1} , & w u^{-1} x , & w v^{-1} x v , &  w x z^{-1} \\
  w y w^{-1} u^{-1} , & w z w^{-1} v , & w z^{-1} y^{-1} , & w^{-1} u^{-1} y u , & w^{-1} v z v^{-1} , & w^{-1} x w v^{-1} \\
  w^{-1} z^{-1} w u , & x u v^{-1} u^{-1} , & x v w v^{-1} , &  x w^{-1} y w , & x z x^{-1} u^{-1} , & x^{-1} u^{-1} z u \\
  x^{-1} v^{-1} x u , & x^{-1} w x v , & x^{-1} y x w^{-1} , & y u w^{-1} u^{-1} , & y v y^{-1} u^{-1} , & y w x w^{-1} \\
  y x^{-1} z x , &  y^{-1} u^{-1} v u , & y^{-1} v^{-1} z^{-1} v , & y^{-1} w^{-1} y u , & y^{-1} x y w , & z u x^{-1} u^{-1} \\
  z^{-1} u^{-1} w u , & u^5 , & u^2 v^{-1} w^{-1} u^{-1} , & u v u y^{-1} u^{-2} , &  u v w^{-1} x^{-2} , & u v x v^{-1} u^{-2} \\
  u v y v^{-2} , & u w u z^{-1} u^{-2} , & u w x^{-1} y^{-2} , & u w z w^{-2} , & u x u v^{-1} u^{-2} , & u x y^{-1} z^{-2} \\
  u y u w^{-1} u^{-2} , &  u^{-2} v z u , & u^{-2} v^{-1} w^{-1} u^{-2} , & u^{-1} v^{-1} u^{-1} y^{-1} u^{-2} , & u^{-1} v^{-1} x^{-1} v^2 .  & 
\end{array}
$$

In order to see that $\mathcal{S}$ is in fact a special cover, we must establish a number of observations on embedded surface components in covers of $\hat{\mathcal{D}}_{\WS}$. 
In the following paragraphs we always assume that we are given a finite cover $\mathcal{B}$ of $\mathcal{D}_{\WS}$ together with its canonical immersed surface defined by the lift of $\hat{\mathcal{D}}_{\WS}$ in $\mathcal{B}$. Whenever we refer to faces of the decomposition of $\mathcal{B}$ into dodecahedra, we explicitly say so. Otherwise we refer to the faces of the lift of the natural cubulation in $\mathcal{B}$. We start with a simple definition.

\begin{definition}
  A dodecahedral vertex is said to be \emph{near} a component $S$ of the canonical immersed surface of $\mathcal{B}$ if it is the endpoint of an edge of the cubulation dual to $S$.
\end{definition}

\begin{lemma}
  \label{lem:selfosc}
  An embedded component $S$ of the canonical immersed surface of $\mathcal{B}$ self-osculates if and only if at least one of the following two situations occurs.

  \begin{enumerate}[a)]
    \item There exists a dodecahedron containing more than one pentagonal disk of $S$.
    \item The number of dodecahedral vertices near $S$ is strictly smaller than its number of pentagonal disks.
  \end{enumerate}
\end{lemma}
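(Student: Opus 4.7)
My approach is to analyse self-osculation of $S$ vertex-by-vertex in the cubulation, splitting the cube-vertices of $\mathcal{B}$ into four types---dodecahedron centres $c(D)$, face centres $c(F)$, edge midpoints $m(e)$, and dodecahedral vertices $v$---and checking when two distinct dual edges of $S$ share an initial or terminal vertex. Each pentagonal disk $P$ of $S$, parallel to a face $F$ in a dodecahedron $D$, has exactly $11$ cube-edges dual to it: one central edge $c(F)-c(D)$, five spoke edges $m(e_i)-c(F'_i)$ (for the edges $e_i$ of $F$, with $F'_i$ the neighbouring face of $D$ across $e_i$), and five outer edges $v_j-m(e'_j)$ (for the vertices $v_j$ of $F$, with $e'_j$ the third edge at $v_j$ in $D$). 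Under the 2-sided orientation of $S$, all dual edges of $S$ incident to any fixed cube-vertex $w$ share a common role at $w$ (all initial or all terminal), so self-osculation at $w$ is equivalent to having two distinct cube-edges at $w$ dual to $S$.

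At a dodecahedron centre $c(D)$ the only incident dual edges of $S$ are central edges---one per disk of $S$ inside $D$---so self-osculation at $c(D)$ is equivalent to condition (a). At a dodecahedral vertex $v$ every incident dual edge is an outer edge; writing $c_v$ for the number of such cube-edges at $v$ dual to $S$, I use that every dodecahedral edge in the cubulation of $\mathcal{B}$ has degree $5$ to conclude that each cube-edge $v-m(e)$ dual to $S$ serves as the outer dual edge of exactly five pentagonal disks of $S$, one for each dodecahedron around $e$. Double-counting (disk, outer-vertex) incidences gives $\sum_v 5 c_v = 5n$, hence $\sum_v c_v = n$. Since $c_v \ge 1$ at each near dodecahedral vertex, the number $N$ of near vertices satisfies $N \le n$, with equality iff every $c_v \le 1$; thus condition (b), $N<n$, is equivalent to $c_v \ge 2$ at some $v$, i.e.\ self-osculation at some dodecahedral vertex.

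It remains to show that self-osculation at a face centre or edge midpoint always forces (a) or (b). The cases run as follows. Two central dual edges at $c(F)$ force $S$ to contain disks parallel to $F$ in both dodecahedra adjacent to $F$, so at each vertex of $F$ the two distinct ``third edges'' in those dodecahedra are both dual to $S$, giving $c_v \ge 2$ and hence (b). Two spoke dual edges at $c(F)$ along distinct edges $e_1, e_2$ of $F$ force, in each dodecahedron adjacent to $F$, two disks parallel to distinct neighbouring faces, giving (a). Two outer dual edges at $m(e)$ (one at each endpoint of $e$) place, in each of the five dodecahedra around $e$, two disks parallel to the two distinct faces omitting $e$ at those endpoints, giving (a). Finally, two spoke dual edges $m(e)-c(F_i),\ m(e)-c(F_j)$ at $m(e)$ with $F_i, F_j$ distinct faces in the cyclic list of five faces around $e$ yield (a) when $F_i, F_j$ are cyclically adjacent (the induced disks meet in their common dodecahedron), and otherwise produce disks parallel to the same intermediate face in its two adjacent dodecahedra, reducing to the first case at that face centre and hence to (b). The reverse implications are immediate: (a) gives self-osculation at $c(D)$, and (b) gives self-osculation at some dodecahedral vertex. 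The main difficulty is the last subcase, managing the cyclic combinatorics around a degree-$5$ dodecahedral edge.
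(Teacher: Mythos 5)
Your proof is correct and follows essentially the same route as the paper's: orient $S$ transversely towards the dodecahedron centres, identify self-osculation at a dodecahedron centre with condition (a) and --- via the count $\sum_v c_v = n$ coming from the degree-$5$ dodecahedral edges --- self-osculation at a dodecahedral vertex with condition (b), and then reduce osculation at face centres and edge midpoints to these two cases. One small overstatement: at a face centre the central dual edges are initial while the spoke dual edges are terminal (and dually at an edge midpoint), so it is not true that all dual edges at a fixed cube-vertex share a common role; this is harmless, since mixed pairs do not produce self-osculation and your case analysis covers exactly the same-role pairs.
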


\begin{proof}
  First note that $S$ is $2$--sided and can be transversely oriented such that one side always points towards the centres of the dodecahedra it intersects. From this it is apparent that if one of a) or b) occurs, then the surface component must self-osculate.

  Assume that a) does not hold; that is, all dodecahedra contain at most one pentagonal disk of $S$. Hence, no self-osculation can occur through the centre of a dodecahedron. Since every surface component $S$ is made out of pentagonal disks, with five of such disks meeting in every vertex, $S$ has as many pentagonal disks as it has pentagonal vertices. Moreover, every such pentagonal vertex of $S$ must be near exactly one dodecahedral vertex of $\mathcal{B}$. Hence, the number of dodecahedral vertices that $S$ is near to is bounded above by its number of pentagonal disks. Equality therefore occurs if and only if $S$ is not near any dodecahedral vertex twice. Hence, if b) does not hold, no self-osculation can occur through a vertex of a dodecahedron.

  \medskip

It remains to prove that if $S$ self-osculates, then it must self-osculate through a centre point of a dodecahedron or through a vertex of a dodecahedron. The only other possibilities are that it self-osculates through either the midpoint of a dodecahedral edge or through the centre point of a dodecahedral face.

First assume that the surface self-osculates through the midpoint of a dodecahedral edge $e$. Then either the surface has two disjoint pentagonal disks both parallel to $e$ and hence also self-osculates through the two dodecahedral endpoints of $e$; or the surface has two disjoint pentagonal disks both intersecting $e$, in which case there exists a pair of pentagonal disks in the same dodecahedron -- and the surface self-osculates through the centre of that dodecahedron.

Next assume the surface self-osculates through the centre point of a dodecahedral face $f$. Then either the surface has two disjoint pentagonal disks both parallel to $f$ and hence also self-osculates through the five dodecahedral vertices of $f$; or the surface has two disjoint pentagonal disks both intersecting $f$, in which case there exists a pair of pentagonal disks in the same dodecahedron and the surface self-osculates through the centre of that dodecahedron.
\end{proof}

\begin{lemma}
  \label{lem:interosc}
  A pair of intersecting, embedded, and non-self-osculating components $S$ and $T$ of the canonical immersed surface of $\mathcal{B}$ inter-osculates if and only if at least one of the following two situations occurs.

  \begin{enumerate}[a)]
    \item Some dodecahedron contains pentagonal disks of both $S$ and $T$ which are disjoint.
    \item The number of all dodecahedral vertices near $S$ or $T$ minus the number of all pairs of intersecting pentagonal disks is strictly smaller than the number of all pentagonal disks in $S$ or $T$.
  \end{enumerate}
\end{lemma}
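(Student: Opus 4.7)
The plan is to follow the structure of Lemma~\ref{lem:selfosc} closely. For the ``if'' direction, observe that condition a) immediately produces inter-osculation at the centre of the dodecahedron containing the two disjoint disks $P \subset S$ and $Q \subset T$: each of $P$ and $Q$ has a corner at that centre, so the associated dual edges meet there, and they cannot lie in a common square because $P \cap Q = \emptyset$. For condition b), I would extend the pentagonal-vertex count from the proof of Lemma~\ref{lem:selfosc} to both surfaces: the total number of pentagonal vertices in $S \cup T$ equals the total number of pentagonal disks, and each such vertex is near a unique dodecahedral vertex. The only ``legitimate'' coincidences (two dual edges meeting at a dodecahedral vertex inside a common square) are contributed by pairs of pentagonal disks, one from $S$ and one from $T$, that share a square, and there is exactly one such coincidence per pair of intersecting pentagonal disks. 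Hence $V - I < D$ forces at least one \emph{illegitimate} coincidence, which by definition is a pair of dual edges of $S$ and $T$ sharing a dodecahedral vertex but not lying in a common square.

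For the ``only if'' direction, assume that $S$ and $T$ inter-osculate with witnessing vertex $v$ of the cubulation. The vertex $v$ must be one of four types: the centre of a dodecahedron, a dodecahedral vertex, the midpoint of a dodecahedral edge, or the centre of a pentagonal face. The first two types correspond respectively to a) and to the deficit in b). To rule out the remaining two, I would reuse the geometric dichotomy from the end of the proof of Lemma~\ref{lem:selfosc}: if $v$ is the midpoint of a dodecahedral edge $e$, then the two witnessing pentagonal disks either both run parallel to $e$ (which propagates the inter-osculation to the two endpoint dodecahedral vertices of $e$) or both cross $e$ (forcing disks of $S$ and $T$ inside a common dodecahedron, and hence inter-osculation at that dodecahedron's centre). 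The face-centre case is the analogous dichotomy with five endpoint vertices in place of two. Once the reduction is complete, inter-osculation through a centre is exactly condition a), and inter-osculation through a dodecahedral vertex is exactly the vertex-counting deficit in b).

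I expect the main obstacle to be the bookkeeping underlying condition b): verifying precisely that each pair of intersecting pentagonal disks of $S$ and $T$ contributes exactly one shared dodecahedral vertex to the legitimate pool and that no other legitimate coincidences exist, so that $V - I < D$ genuinely captures the presence of an inter-osculation at some dodecahedral vertex. The geometric reductions in the ``only if'' direction transfer almost verbatim from Lemma~\ref{lem:selfosc}, and condition a) is a direct translation of centre-based inter-osculation; the substantive work lies in auditing this combinatorics at shared squares so that the inequality in b) is both necessary and sufficient.
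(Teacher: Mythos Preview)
Your plan is essentially the paper's own approach: reduce any inter-osculation to one occurring at a dodecahedron centre or a dodecahedral vertex (the paper calls this Claim~1 and, like you, imports it wholesale from the second half of the proof of Lemma~\ref{lem:selfosc}), then identify the centre case with condition~a) and the vertex case with the counting deficit in~b). The organisational difference---you argue the ``only if'' direction directly, the paper argues it contrapositively---is cosmetic.

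Where the paper adds something you have not yet written down is in isolating two further preliminary facts, both consequences of the non-self-osculation hypothesis:
\begin{itemize}
  \item[\textbf{Claim 2.}] A pentagonal disk of $S$ meets at most one pentagonal disk of $T$ (any double intersection forces $S$ or $T$ to self-osculate or self-intersect).
  \item[\textbf{Claim 3.}] A dodecahedral vertex near an intersection of $S$ and $T$ cannot be near any other pentagonal disk of either surface.
\end{itemize}
These two claims are precisely the ``bookkeeping'' you anticipate as the main obstacle, and without them the inequality in~b) does not translate cleanly into an osculation at a dodecahedral vertex. In particular, your sentence ``there is exactly one such coincidence per pair of intersecting pentagonal disks'' is not something you can simply read off the local picture; it needs Claims~2 and~3 to control how many disks of each surface can be near a given dodecahedral vertex and to rule out unaccounted-for sharing. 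The paper does not phrase the count as ``one per pair'' at all---it uses Claims~2 and~3 to argue directly that any common dodecahedral vertex away from the intersections is already an inter-osculation.

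One small caution on your reduction step: when you say that two disks crossing the same dodecahedral edge $e$ are forced into a common dodecahedron ``and hence inter-osculate at that dodecahedron's centre'', note that such disks may well intersect there, in which case their dual edges at the centre \emph{do} lie in a common square and there is no centre inter-osculation. This is harmless---one checks that in that situation the original dual edges at the midpoint of $e$ already shared a square, so there was no inter-osculation at $m$ to begin with---but it is worth saying. The paper sidesteps this by stating Claim~1 as an existence statement and then, in the contrapositive direction, using the negation of~a) together with Claim~2 to force any two disks in a common dodecahedron to intersect.
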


\begin{proof}
  We first need to establish the following three claims.

  \noindent
  {\bf Claim 1}: If $S$ and $T$ inter-osculate, then they inter-osculate through the centre of a dodecahedron or a vertex of a dodecahedron.

  This follows from the arguments presented in the second part of the proof of Lemma~\ref{lem:selfosc} since inter-osculation locally behaves exactly like self-osculation.

  \noindent
  {\bf Claim 2}: Every pentagonal disk of $S$ intersects $T$ in at most one pentagonal disk and vice versa.

  A pentagonal disk can intersect another pentagonal disk in five different ways. Every form of multiple intersection causes either $S$ or $T$ to self-osculate or even self-intersect. 

  \noindent
  {\bf Claim 3}: A dodecahedral vertex near an intersection of $S$ and $T$ cannot be near any other pentagonal disk of $S$ or $T$, other than the ones close to the intersection.

  Assume otherwise, then this causes either $S$ or $T$ to self-osculate or even self-intersect.

  \medskip
  We now return to the proof of the main statement. If a) is satisfied, then the surface pair inter-osculates through the centre of the dodecahedron (see also the proof of Lemma~\ref{lem:selfosc}). If b) is satisfied, then by Claim 2 and Claim 3, both $S$ and $T$ must be near a dodecahedral vertex away from their intersections and thus $S$ and $T$ inter-osculate.
  
For the converse assume that neither a) nor b) holds. By Claim 1, it suffices to show that $S$ and $T$ do not  inter-osculate through the centre of a dodecahedron or a vertex of a dodecahedron.
  
We first show that $S$ and $T$ do not inter-osculate through the centre of a dodecahedron. If at most one of $S$ or $T$ meets a dodecahedron, then this is true for its centre. Hence assume that both $S$ and $T$ meet a dodecahedron in pentagonal discs. By Claim 2 the dodecahedron contains exactly one pentagonal disc from each surface. These intesect since a) is assumed false. The only dual edges to $S$ (resp. $T$) with a vertex at the centre of the cube run from the centre of the pentagonal face of the dodecahedron dual to $S$ (resp. $T$) to the centre of the dodecahedron. But these two edges lie in the boundary of a square in the dodecahedron since the pentagonal discs intersect and hence the pentagonal faces are adjacent. Hence $S$ and $T$ do not inter-osculate through the centre of a dodecahedron.
    
We next show that $S$ and $T$ do not inter-osculate through the vertex of a dodecahedron. The negation of b) is that the number of all dodecahedral vertices near $S$ or $T$ minus the number of all pairs of intersecting pentagonal disks equals the number of all pentagonal disks of $S$ and $T.$ Suppose a dodecahedral vertex is the endpoint of dual edges to squares in $S$ and $T$. If the dual edges are contained in the same dodecahedron then they are in the boundary of a common square. Hence assume they are contained in different dodecahedra. Then the equality forces at least one of the dual edges to be in the boundary of a cube intersected by both $S$ and $T$. But then at least one of the surfaces self-osculates. 
\end{proof}

Due to Lemmata~\ref{lem:selfosc} and \ref{lem:interosc}, checking for self-osculating embedded surface components is a straightforward task. Furthermore, as long as surface components are embedded and non-self-osculating, checking for inter-osculation of a surface pair is simple as well.

In the cover $\mathcal{S}$ we have:
\begin{enumerate}[a)]
  \item the canonical immersed surface splits into $60$ embedded components,
  \item every surface component of $\mathcal{S}$ is made up of $12$ pentagonal disks (and thus is orientable of genus $4$, see the description of the canonical surface components of $\mathcal{C}$ in Section~\ref{ssec:six} for details), 
  \item every surface component distributes its $12$ pentagonal disks over $12$ distinct dodecahedra, 
  \item every surface component is near $12$ dodecahedral vertices, and
  \item every pair of intersecting surface components intersects in exactly three pentagonal disks (and hence in exactly three dodecahedra), and for each such pair both surface components combined are near exactly $21$ dodecahedral vertices.
\end{enumerate}

These properties of $\mathcal{S}$ can be checked using the \texttt{GAP} script available from \cite{ST}. From them, and from Lemmata~\ref{lem:selfosc} and \ref{lem:interosc} it follows that $\mathcal{S}$ is a special cover. The gluing orbits for $\mathcal{S}$ of the face classes from $\mathcal{D}_{\WS}$, as well as all $60$ surface components are listed in Appendix~\ref{app:special}. 


\subsection{Covers of higher degree}

An exhaustive enumeration of all subgroups up to index $9$ reveals a total of $490$ covers, but no further examples of covers where the canonical surface splits into embedded components (and in particular no further special covers). There are, however, $20$ examples of degree $8$ covers where the canonical surface splits into two immersed connected components (all with first homology group $\mathbb{Z} \oplus \mathbb{Z}_{2}^{3} \oplus \mathbb{Z}_{3}^{2} \oplus \mathbb{Z}_{5}^{3}$). Moreover, there are $10$ examples of degree $9$ covers, where the canonical surface splits into two components, one of which is embedded (all with first homology group $\mathbb{Z} \oplus \mathbb{Z}_{3} \oplus \mathbb{Z}_{4} \oplus \mathbb{Z}_{5}^{3} \oplus \mathbb{Z}_{7}$). All of them are Haken, as can be seen by their first integral homology groups.

\begin{table}[htbp]
    \begin{center}
	\begin{tabular}{|c|l|c|c|c|r|}
		\toprule
		degree & $H_1(X)$ & $\beta_1(X)$ & $\#$ surf. comp. & $\exists$ emb. surf. comp. & $\# $ of covers \\
		\midrule
		$1$&$\mathbb{Z}_{5}^{3}$ & $0$ & $1$ & no & $1$ \\
		\midrule
		&&&&& {\bf $\Sigma$ = 1} \\
		\midrule
		\midrule

		$5$&$\mathbb{Z}_{5}^{2} \oplus \mathbb{Z}_{25}^{2}$ &$0$ & $1$ & no & $25$ \\
		&$\mathbb{Z}_{3} \oplus \mathbb{Z}_{5} \oplus \mathbb{Z}_{25}^{3}$ &$0$ & $1$ & no & $6$ \\
		&$\mathbb{Z}_{5}^{6} \oplus \mathbb{Z}_{25}$ &$0$ & $1$ & no & $6$ \\
		&$\mathbb{Z}^{4} \oplus \mathbb{Z}_{3}^{2}$ &{\bf 4} & $5$ & no & $1$ \\

		\midrule
		&&&&& {\bf $\Sigma$ = 38} \\
		\midrule
		\midrule

		$6$&$\mathbb{Z}_{4} \oplus \mathbb{Z}_{5}^{3} $ &$0$ & $1$ & no & $6$ \\
		&$\mathbb{Z}_{3}^{2} \oplus \mathbb{Z}_{4} \oplus \mathbb{Z}_{5}^{3} $ &$0$ & $1$ & no & $15$ \\
		&$\mathbb{Z}_{3} \oplus \mathbb{Z}_{5}^{3} \oplus \mathbb{Z}_{11}^{2} $ &$0$ & $1$ & no & $24$ \\
		&$\mathbb{Z}_{3}^{2} \oplus \mathbb{Z}_{4} \oplus \mathbb{Z}_{5}^{3} \oplus \mathbb{Z}_{16}^{2} $ &$0$ & $1$ & no & $15$ \\
		&$\mathbb{Z}^{5} \oplus \mathbb{Z}_{2}^{2} \oplus \mathbb{Z}_{5}^{3}$ & {\bf 5} & $6$ & yes (all) & $1$ \\
		\midrule
		&&&&& {\bf $\Sigma$ = 61} \\
		\midrule
		\midrule

		$7$ & $\mathbb{Z}_{2}^{3} \oplus \mathbb{Z}_{5}^{3}$ & $0$ & $1$ & no &$20$ \\
		& $\mathbb{Z}_{3}^{2} \oplus \mathbb{Z}_{5}^{3} \oplus \mathbb{Z}_{7} \oplus \mathbb{Z}_{9} \oplus \mathbb{Z}_{11}$ &$0$ & $1$ & no & $30$ \\
		\midrule
		&&&&& {\bf $\Sigma$ = 50} \\
		\midrule
		\midrule

		$8$ & $\mathbb{Z}_{2}^{3} \oplus \mathbb{Z}_{5}^{3}$ &$0$ & $1$ & no & $40$ \\
		& $\mathbb{Z}_{2}^{3} \oplus \mathbb{Z}_{3} \oplus \mathbb{Z}_{5}^{3} \oplus \mathbb{Z}_{9}$ &  $0$ &$1$ & no &$20$ \\
		& $\mathbb{Z}_{2} \oplus \mathbb{Z}_{5}^{3} \oplus \mathbb{Z}_{7}^{3}$ &$0$ & $1$ & no & $40$ \\
		& $\mathbb{Z}_{3} \oplus \mathbb{Z}_{4} \oplus \mathbb{Z}_{5}^{3} \oplus \mathbb{Z}_{19}^{2}$ &$0$ & $1$ & no & $15$ \\
		& $\mathbb{Z}_{2}^{5} \oplus \mathbb{Z}_{3} \oplus \mathbb{Z}_{5}^{3} \oplus \mathbb{Z}_{7}^{2}$ &$0$ & $1$ & no & $10$ \\
		& $\mathbb{Z} \oplus \mathbb{Z}_{2}^{3} \oplus \mathbb{Z}_{3}^{2} \oplus \mathbb{Z}_{5}^{3}$ &{\bf 1} &  $2$ & no &$20$ \\
		& $\mathbb{Z} \oplus \mathbb{Z}_{2}^{3} \oplus \mathbb{Z}_{3} \oplus \mathbb{Z}_{5}^{3} \oplus \mathbb{Z}_{13}^{2}$ &{\bf 1} & $1$ & no & $40$ \\
		\midrule
		&&&&& {\bf $\Sigma$ =  185} \\
		\midrule
		\midrule

		$9$& $\mathbb{Z}_{2} \oplus \mathbb{Z}_{3} \oplus \mathbb{Z}_{4}^{2} \oplus \mathbb{Z}_{5}^{3}$ &$0$ & $1$ & no & $60$ \\
		& $\mathbb{Z}_{2} \oplus \mathbb{Z}_{3} \oplus \mathbb{Z}_{5}^{3} \oplus \mathbb{Z}_{8}^{2}$ &$0$ & $1$ & no & $40$ \\
		& $\mathbb{Z}_{2}^{4} \oplus \mathbb{Z}_{5}^{3} \oplus \mathbb{Z}_{9} \oplus \mathbb{Z}_{89}$ &$0$ & $1$ & no & $15$ \\
		& $\mathbb{Z} \oplus \mathbb{Z}_{3} \oplus \mathbb{Z}_{4} \oplus \mathbb{Z}_{5}^{3} \oplus \mathbb{Z}_{7}$ &{\bf 1} & $2$ & yes (one) &$10$ \\
		& $\mathbb{Z} \oplus \mathbb{Z}_{3} \oplus \mathbb{Z}_{4}^{2} \oplus \mathbb{Z}_{5}^{3} \oplus \mathbb{Z}_{19}$ &{\bf 1} & $1$ & no & $30$ \\
		\midrule
		&&&&& {\bf $\Sigma$ = 155} \\
                \midrule
		\bottomrule
	\end{tabular}
	\caption{First homology groups of all $490$ covers of degree up to nine. \label{tab:1to9}}
    \end{center}
\end{table}

\begin{table}[htbp]
    \begin{center}
	\begin{tabular}{|l|r|r|}
		\toprule
		$H_1(X)$ & $\#$ surf. comp. & $\#$ covers \\
		\midrule
		$\mathbb{Z}^{5} \oplus \mathbb{Z}_{4}^{2} \oplus \mathbb{Z}_{5}^{3}$&$6$&$20$\\
		$\mathbb{Z}^{5} \oplus \mathbb{Z}_{2}^{2} \oplus \mathbb{Z}_{4} \oplus \mathbb{Z}_{5}^{3}$&$6$&$20$\\
		$\mathbb{Z}^{6} \oplus \mathbb{Z}_{2}^{2} \oplus \mathbb{Z}_{5}^{3}$&$7$&$12$\\
		$\mathbb{Z}^{7} \oplus \mathbb{Z}_{5}^{3}$&$7$&$12$\\
		\midrule
		&& {\bf $\Sigma$ =  64} \\
          \midrule
          \bottomrule
	\end{tabular}

	\caption{Summary of all $64$ fix-point free double covers of $\mathcal{C}$. By construction, all surface components are embedded. \label{tab:higher}}
    \end{center}
\end{table}

\medskip
In an attempt to obtain further special covers we execute a non-exhaustive, heuristic search for higher degree covers. This is necessary since complete enumeration of subgroups quickly becomes infeasible for subgroups of index larger than $9$. This more targeted search is done in essentially two distinct ways.

\medskip
In the first approach we compute normal cores of all irregular covers of degrees $7$, $8$, and $9$ from the enumeration of subgroups of $\pi_1 (\WS)$ of index at most $9$ described above. This is motivated by the fact that the index $60$ normal core of $\pi_1 (\mathcal{C})$ yields a special cover. The normal cores have indices $168$, $504$, $1344$, $2520$, $20160$, and $181440$. Of the ones with index at most $2520$, we construct the corresponding cover. Very often, the covers associated to these normal cores exhibit a single (immersed) surface component. However, the normal cores of the $10$ subgroups corresponding to the covers of degree $9$ with two surface components yield (regular) covers where the canonical immersed surface splits into nine embedded components. All of these covers are of degree $504$ with deck transformation group $\operatorname{PSL}(2,8)$. Each of the surface components has $672$ pentagons. Accordingly, each of them must be (orientable) of genus $169$. All nine surface components necessarily self-osculate (they are embedded and contain more pentagonal disks than there are dodecahedra in the cover). The first homology group of all of these covers is given by
$$\mathbb{Z}^{8} \oplus \mathbb{Z}_2^{10} \oplus \mathbb{Z}_3 \oplus \mathbb{Z}_4^{9} \oplus \mathbb{Z}_5^{17} \oplus \mathbb{Z}_7 \oplus \mathbb{Z}_8^{6} \oplus \mathbb{Z}_9^{7} \oplus \mathbb{Z}_{17}^{28} \oplus \mathbb{Z}_{27}^{7} \oplus \mathbb{Z}_{29}^{9} \oplus \mathbb{Z}_{83}^{18} .$$
In addition, there are $120$ subgroups with a core of order $1,344$, and factor group isomorphic to a semi-direct product of $\mathbb{Z}_2^3$ and $\operatorname{PSL}(3,2)$. For $40$ of them the corresponding (regular) cover splits into $8$ immersed components. These include the covers of degree $8$ where the canonical immersed surface splits into two immersed components.

\medskip
In the second approach we analyse low degree covers of $\mathcal{C}$ from Section~\ref{ssec:six}. This is motivated by the fact that, in such covers, the canonical surface necessarily consists of embedded components. 

There are $127$ $2$-fold covers of $\mathcal{C}$, $64$ of which are fix-point free (i.e., they do not identify two pentagons of the same dodecahedron -- a necessary condition for a cover to be special, see the end of Section~\ref{ssec:six}). For $40$ of them the canonical surface still only splits into six embedded components. For the remaining $24$, the surface splits into $7$ components. For more details, see Table~\ref{tab:higher}.

The $127$ $2$-fold covers of $\mathcal{C}$ altogether have $ 43,905 $ $2$-fold covers. Amongst these $24$-fold covers of $\mathcal{D}_{\WS}$, $ 16,192 $ are fix-point free. They admit $6$ to $14$ surface components with a single exception where the surface splits into $24$ components. This cover is denoted by $\mathcal{E}$. Details on the number of covers and surface components can be found in Table~\ref{tab:24}.

We have for the generators of the subgroup corresponding to the cover $\mathcal{E}$

$$ \begin{array}{lllllll} 
    u^{-2},&
    uvz,&
    uv^{-1}w^{-1},&
    uwv,&
    uw^{-1}x^{-1},&
    uxw,&
    ux^{-1}y^{-1}, \\

    uyx,&
    uy^{-1}z^{-1},&
    uzy,&
    uz^{-1}v^{-1},&
    vux,&
    vu^{-1}w,&
    vwy^{-1}, \\

    vxz,&
    vx^{-1}z,&
    vy^{-1}x^{-1},&
    z^{-1}uy^{-1},&
    z^{-1}u^{-1}x^{-1},&
    z^{-1}vy^{-1}v^{-1},&
    z^{-1}wx, \\
    
    z^{-1}w^{-1}zv^{-1},&
    z^{-1}yv^{-2},&
    z^{-1}y^{-1}w,&
    z^{-2}wv^{-1},&
    wuy,&
    wv^{-1}xv .& 
  \end{array} $$

Surface components in $\mathcal{E}$ are small ($12$ pentagonal disks per surface, as also observed in the degree $60$ special cover $\mathcal{S}$, see Section~\ref{ssec:special}). This motivates an extended search for a degree $48$ special cover by looking at degree $2$ covers of $\mathcal{E}$. However, amongst the $131,071$ fix-point free covers of degree $2$, no special cover exists. More precisely, there are $120,205$ covers with $24$ surface components, $10,200$ with $25$ surface components, $240$ with $26$ and $27$ surface components each, $162$ with $28$, and $24$ with $33$ surface components. For most of them, most surface components self-osculate.

\begin{table}[!h]
  \begin{center}
    \begin{tabular}{|l|r|r|r|r|r|r|r|r|r|r|r|r||r|}
    \toprule
    $\#$ surf. comp. & $6$&$7 $&$7 $&$8 $&$8 $&$8 $&$9 $&$9 $&$12 $&$14 $&$14 $&$24 $&$ $ \\
    \midrule
    $\#$ covers & $ 8,960$&$ 3,240$&$ 2,160$&$ 180$&$ 720$&$ 540$&$ 240$&$ 24$&$ 85$&$ 24$&$ 18$&$ 1$& {\bf $\Sigma$ = 16,192 } \\
    \bottomrule

    \end{tabular}
  \end{center}
  \caption{Summary of all $16,192 $ fix-point free double covers of the $127$ double covers of $\mathcal{C}$. By construction, all surface components are embedded. \label{tab:24}} 
\end{table}


\section{Poincar\'e homology sphere and projective space}
\label{sec:covers of PHS}

The Poincar\'e homology sphere has as fundamental group the binary icosahedral group of order $120$, which is isomorphic to $\operatorname{SL}(2,5)$. From its subdivision given by the dodecahedron, we can deduce a presentation with six generators dual to the six pentagons of the subdivision, and one relator dual to each of the $10$ edges:

\begin{figure}[htb]
	\centerline{\includegraphics[height=6cm]{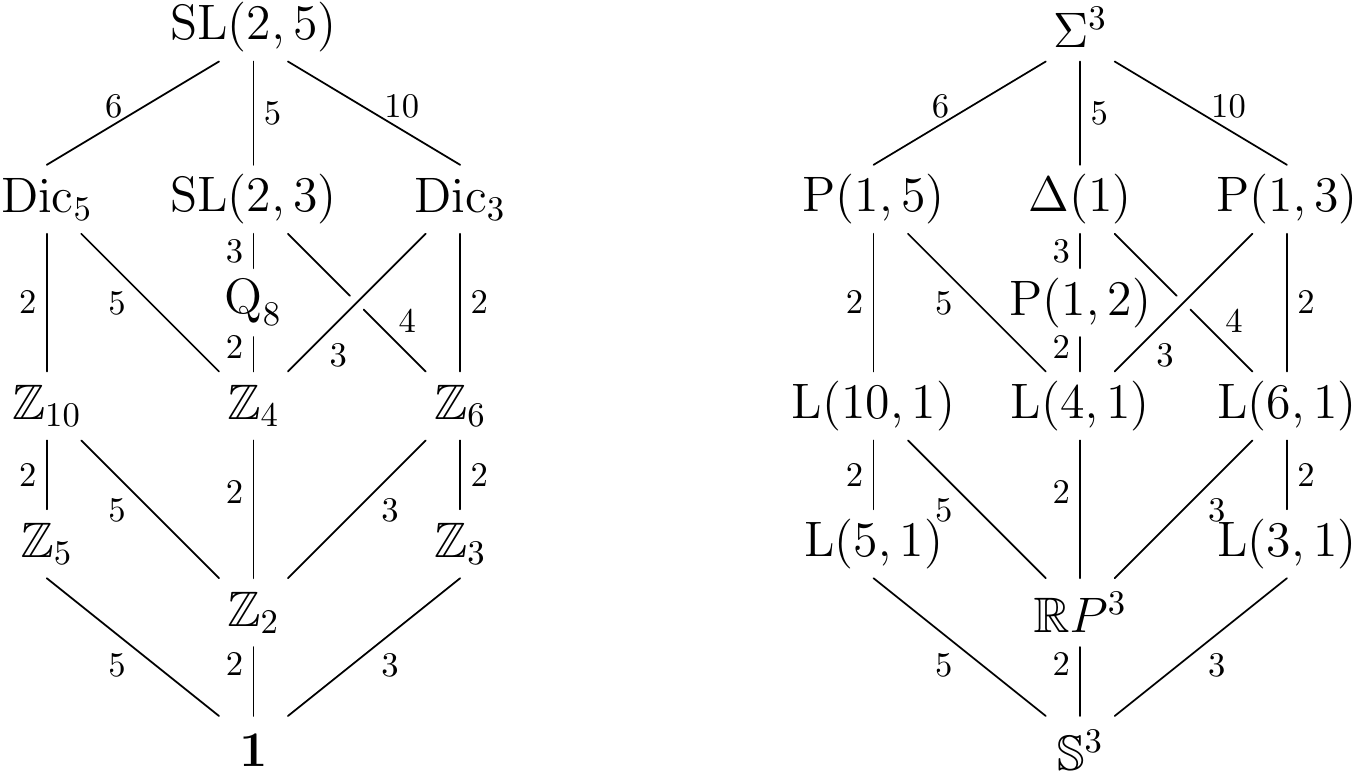}}
	\caption{Left: subgroup lattice of $\operatorname{SL}(2,5)$ with indices. Right: covers of $\PSH$ with degrees. Here $\operatorname{P}(n,m)$ denotes the prism space with parameters $n$ and $m$, and $\Delta(n)$ denotes the tetrahedral space with parameter $n$.\label{fig:sl25}}
\end{figure}

$$ \begin{array}{llllll} \pi_1 (\mathcal{D}_{\PSH}) = \langle \,\, u,\, v,\, w,\, x,\, y,\, z \, \mid & 
	uxz,&uyv,&uzw,&uvx,&uwy, \\
	&xy^{-1}z,&yz^{-1}v,&zv^{-1}w,&vw^{-1}x,&wx^{-1}y \,\, \rangle .
  \end{array} $$

$\operatorname{SL}(2,5)$ has $76$ subgroups falling into $12$ conjugacy classes forming the subgroup lattice shown in Figure~\ref{fig:sl25} on the left hand side. For the corresponding hierarchy of covers together with the topological types of the covering $3$-manifolds see Figure~\ref{fig:sl25} on the right hand side.

By construction, the universal cover of $\mathcal{D}_{\PSH}$ is the $120$-cell, which is dual to the simplicial $600$-cell. In particular, the dual cell decomposition of any of the $12$ covers is a (semi-simplicial) triangulation. The dual of $\mathcal{D}_{\PSH}$ itself is isomorphic to the minimal five-tetrahedron triangulation of the Poincar\'e homology sphere.

\begin{table}[b!]
    \begin{center}
	\begin{tabular}{rccrcrcc}
            \toprule
	    deg. & top. type. & subgroup & $f$-vec. & embedded & $\#$ surf. & regular & deck trafo grp. \\
	    \midrule
            $1$&$\PSH$&$\operatorname{SL} (2,5)$&$(5,10,6,1)$&no&$1$&yes&$1$ \\
            $5$&$\Delta(1)$&$\operatorname{SL}(2,3)$&$(25,50,30,5)$&yes&$5$&no&$\operatorname{A}_5$ \\
            $6$&$\operatorname{P}(1,5)$&$\operatorname{Dic}(5)$&$(30,60,36,6)$&no&$6$&no&$\operatorname{A}_5$ \\
            $10$&$\operatorname{P}(1,3)$&$\operatorname{Dic}(3)$&$(50,100,60,10)$&yes&$10$&no&$\operatorname{A}_5$ \\
            $12$&$\operatorname{L}(10,1)$&$\mathbb{Z}_{10}$&$(60,120,72,12)$&no&$12$&no&$\operatorname{A}_5$ \\
            $15$&$\operatorname{P}(1,2)$&$Q_8$&$(75,150,90,15)$&yes&$15$&no&$\operatorname{A}_5$ \\
            $20$&$\operatorname{L}(6,1)$&$\mathbb{Z}_6$&$(100,200,120,20)$&yes&$20$&no&$\operatorname{A}_5$ \\
            $24$&$\operatorname{L}(5,1)$&$\mathbb{Z}_5$&$(120,240,144,24)$&yes&$24$&no&$\operatorname{SL} (2,5)$ \\
            $30$&$\operatorname{L}(4,1)$&$\mathbb{Z}_4$&$(150,300,180,30)$&yes&$30$&no&$\operatorname{A}_5$ \\
            $40$&$\operatorname{L}(3,1)$&$\mathbb{Z}_3$&$(200,400,240,40)$&yes&$40$&no&$\operatorname{SL} (2,5)$ \\
            $60$&$\mathbb{R}P^3$&$\mathbb{Z}_2$&$(300,600,360,60)$&yes&$60$&yes&$\operatorname{A}_5$ \\
            $120$&$\mathbb{S}^3$&$\mathbf{1}$&$(600,1200,720,120)$&yes&$120$&yes&$\operatorname{SL} (2,5)$ \\
            \bottomrule
	\end{tabular}
    \end{center}
    \caption{Covers of $\PSH$. $\operatorname{P}(n,m)$ denotes the prism space with parameters $n$ and $m$, and $\Delta(n)$ denotes the tetrahedral space with parameter $n$. ``$f$-vec.'' denotes the $f$-vector of the cover as a decomposition into dodecahedra. I.e., $(25,50,30,5)$ means that the corresponding cover contains $25$ vertices, $50$ edges, $30$ pentagons, and $5$ dodecahedra. ``deck trafo grp.'' denotes the deck transformation group of the cover.\label{tab:PSH}}
\end{table}

Most of the topological types are determined by the isomorphism type of the subgroups. The only two non-trivial cases are the lens spaces $\operatorname{L} (5,1)$ and $\operatorname{L} (10,1)$. For the former, we passed to the dual triangulation of the cover of degree $24$ using the {\em GAP}-package simpcomp \cite{simpcomp}, and then fed the result to the $3$-manifold software {\em Regina} \cite{Regina} to determine the topological type of the cover to be $\operatorname{L} (5,1)$. The latter is then determined by the observation that there is no $2$-to-$1$-cover of $\operatorname{L} (10,3)$ to $\operatorname{L} (5,1)$.

Regarding the canonical immersed surface, the situation is quite straightforward. Since all edges of $\mathcal{D}_{\PSH}$, or of any of its covers, are of degree three, the canonical surface is a surface decomposed into pentagonal disks with three such disks meeting in each vertex. Consequently, all surface components must be $2$-spheres isomorphic to the dodecahedron, thus have $12$ pentagons, and the number of connected components of the canonical surface must coincide with the degree of the cover. Moreover, each surface component runs parallel to the $2$-skeleton of a single dodecahedron, and the surface components are embedded if and only if there are no self-intersections of dodecahedra.

In more detail the relevant properties of all covers are listed in Table~\ref{tab:PSH}.

The case of the projective space is rather simple. The only proper cover (of degree $>1$) is the universal cover of degree $2$. Since the edges of $\mathcal{D}_{\mathbb{R}P^3}$ are all of degree two, the canonical surface of $\mathcal{D}_{\mathbb{R}P^3}$ has six embedded sphere components, each  consisting of two pentagons glued along their boundary, surrounding one of the six pentagonal faces each. Consequently, the universal cover is a $3$-sphere decomposed into two balls along a dodecahedron with the canonical surface splitting into $12$  sphere components.



\address{Discrete Geometry group, Mathematical Institute, \\
Freie Universit\"at Berlin, \\Arnimallee 2, 14195 Berlin, Germany\\ {jonathan.spreer@fu-berlin.de\\-----}}

\address{School of Mathematics and Statistics F07,\\ The University of Sydney,\\ NSW 2006 Australia\\ {stephan.tillmann@sydney.edu.au}}

\Addresses

\newpage

\appendix

\section{The special cover $\mathcal{S}$}
\label{app:special}

The special cover $\mathcal{S}$ from Section~\ref{ssec:special} is of degree $60$ with deck transformation group $\operatorname{A}_5$ and abelian invariant $\mathbb{Z}^{41} \oplus \mathbb{Z}_2^{12}$. The subgroup is generated by

$$
\begin{array}{llllll}
   u v^{-1} w^{-1} , & u w^{-1} x^{-1} , & u x^{-1} y^{-1} , & u y^{-1} z^{-1} , & u z^{-1} v^{-1} , & u^{-1} v z \\
  u^{-1} w v , & u^{-1} x w , & u^{-1} y x , & u^{-1} z y , & v u y^{-1} u^{-1} , & v u^{-1} w \\
   v w y^{-1} , & v x v^{-1} u^{-1} , & v x^{-1} z , & v y^{-1} x^{-1} , & v^{-1} u z^{-1} , & v^{-1} u^{-1} x u \\
  v^{-1} x y , & v^{-1} y^{-1} v u , & w u z^{-1} u^{-1} , & w u^{-1} x , & w v^{-1} x v , &  w x z^{-1} \\
  w y w^{-1} u^{-1} , & w z w^{-1} v , & w z^{-1} y^{-1} , & w^{-1} u^{-1} y u , & w^{-1} v z v^{-1} , & w^{-1} x w v^{-1} \\
  w^{-1} z^{-1} w u , & x u v^{-1} u^{-1} , & x v w v^{-1} , &  x w^{-1} y w , & x z x^{-1} u^{-1} , & x^{-1} u^{-1} z u \\
  x^{-1} v^{-1} x u , & x^{-1} w x v , & x^{-1} y x w^{-1} , & y u w^{-1} u^{-1} , & y v y^{-1} u^{-1} , & y w x w^{-1} \\
  y x^{-1} z x , &  y^{-1} u^{-1} v u , & y^{-1} v^{-1} z^{-1} v , & y^{-1} w^{-1} y u , & y^{-1} x y w , & z u x^{-1} u^{-1} \\
  z^{-1} u^{-1} w u , & u^5 , & u^2 v^{-1} w^{-1} u^{-1} , & u v u y^{-1} u^{-2} , &  u v w^{-1} x^{-2} , & u v x v^{-1} u^{-2} \\
  u v y v^{-2} , & u w u z^{-1} u^{-2} , & u w x^{-1} y^{-2} , & u w z w^{-2} , & u x u v^{-1} u^{-2} , & u x y^{-1} z^{-2} \\
  u y u w^{-1} u^{-2} , &  u^{-2} v z u , & u^{-2} v^{-1} w^{-1} u^{-2} , & u^{-1} v^{-1} u^{-1} y^{-1} u^{-2} , & u^{-1} v^{-1} x^{-1} v^2 .  & 
\end{array}
$$

The gluing orbits of face classes from $\mathcal{D}_{\WS}$ are given by

\begin{center}
  \begin{tabular}{l|l}
        \toprule
	face & orbit \\
	\midrule
	$u$&$( 1, 2,14,20, 3)( 4,18,47,24, 7)( 5,12,17,46,23)( 6,19,48,25, 9)( 8,15,49,21,11)(10,16,50,22,13)$ \\
           &$(26,54,43,37,27)(28,42,52,36,29)(30,33,39,38,51)(31,44,53,40,32)(34,55,45,41,35)(56,59,60,58,57)$ \\
\midrule
	$v$&$( 1, 4,26,30, 5)( 2,15,51,32, 6)( 3,13,28,54,21)( 7,29,56,33,11)( 8,27,57,31,12)( 9,10,18,49,23)$ \\
           &$(14,46,40,35,16)(17,38,58,34,19)(20,25,41,42,47)(22,45,59,43,24)(36,55,44,39,37)(48,53,60,52,50)$ \\
\midrule
	$w$&$( 1, 6,34,36, 7)( 2,16,52,37, 8)( 3, 5,31,55,22)( 4,10,35,58,27)( 9,32,57,29,13)(11,12,19,50,24)$ \\
           &$(14,47,43,39,17)(15,18,42,60,38)(20,21,33,44,48)(23,30,56,45,25)(26,28,41,40,51)(46,49,54,59,53)$ \\
\midrule
	$x$&$( 1, 8,38,40, 9)( 2,17,53,41,10)( 3, 7,27,51,23)( 4,15,46,25,13)( 5,11,37,58,32)( 6,12,39,60,35)$ \\
           &$(14,48,45,28,18)(16,19,44,59,42)(20,22,29,26,49)(21,24,36,57,30)(31,33,43,52,34)(47,50,55,56,54)$ \\
\midrule
	$y$&$( 1,10,42,43,11)( 2,18,54,33,12)( 3, 9,35,52,24)( 4,28,59,39, 8)( 5, 6,16,47,21)( 7,13,41,60,37)$ \\
           &$(14,49,30,31,19)(15,26,56,44,17)(20,23,32,34,50)(22,25,40,58,36)(27,29,45,53,38)(46,51,57,55,48)$ \\
\midrule
	$z$&$( 1,12,44,45,13)( 2,19,55,29, 4)( 3,11,39,53,25)( 5,33,59,41, 9)( 6,31,56,28,10) ( 7, 8,17,48,22)$ \\
           &$(14,50,36,27,15)(16,34,57,26,18)(20,24,37,38,46)(21,43,60,40,23)(30,54,42,35,32)(47,52,58,51,49)$ \\
        \bottomrule
  \end{tabular}
\end{center}

\bigskip

\bigskip

The $60$ surfaces are given by their pentagonal disks. Every pentagonal disk is denoted by the corresponding pentagonal face in the lift of $\mathcal{D}_{\WS}$ it is parallel to, and the index of the dodecahedron it is contained in. The labelling follows Figure 2.

$$ \begin{array}{llllllll}
S_{1}&=\Big\langle&(u,\odot)_{60},&
(x,\odot)_{42},&
(w,\odot)_{39},&
(v,\odot)_{41},&
(z,\odot)_{43},&
(y,\odot)_{53},\\ 
&&(y,\otimes)_{54},&
(w,\otimes)_{28},&
(x,\otimes)_{44},&
(v,\otimes)_{33},&
(z,\otimes)_{45},&
(u,\otimes)_{56} 
 \quad\Big\rangle
\end{array} $$

$$ \begin{array}{llllllll}
S_{2}&=\Big\langle&(x,\otimes)_{60},&
(w,\odot)_{52},&
(u,\otimes)_{41},&
(y,\odot)_{58},&
(v,\otimes)_{42},&
(z,\otimes)_{40},\\ 
&&(z,\odot)_{16},&
(u,\odot)_{34},&
(w,\otimes)_{9},&
(y,\otimes)_{10},&
(v,\odot)_{32},&
(x,\odot)_{6} 
 \quad\Big\rangle
\end{array} $$

$$ \begin{array}{llllllll}
S_{3}&=\Big\langle&(y,\otimes)_{60},&
(x,\odot)_{40},&
(u,\otimes)_{39},&
(z,\odot)_{58},&
(w,\otimes)_{53},&
(v,\otimes)_{37},\\ 
&&(v,\odot)_{46},&
(u,\odot)_{51},&
(x,\otimes)_{8},&
(z,\otimes)_{17},&
(w,\odot)_{27},&
(y,\odot)_{15} 
 \quad\Big\rangle
\end{array} $$

$$ \begin{array}{llllllll}
S_{4}&=\Big\langle&(z,\otimes)_{60},&
(y,\odot)_{37},&
(u,\otimes)_{42},&
(v,\odot)_{58},&
(x,\otimes)_{43},&
(w,\otimes)_{35},\\ 
&&(w,\odot)_{24},&
(u,\odot)_{36},&
(v,\otimes)_{47},&
(y,\otimes)_{16},&
(x,\odot)_{34},&
(z,\odot)_{50} 
 \quad\Big\rangle
\end{array} $$

$$ \begin{array}{llllllll}
S_{5}&=\Big\langle&(v,\otimes)_{60},&
(z,\odot)_{35},&
(u,\otimes)_{53},&
(w,\odot)_{58},&
(y,\otimes)_{41},&
(x,\otimes)_{38},\\ 
&&(x,\odot)_{9},&
(u,\odot)_{32},&
(w,\otimes)_{25},&
(z,\otimes)_{46},&
(y,\odot)_{51},&
(v,\odot)_{23} 
 \quad\Big\rangle
\end{array} $$

$$ \begin{array}{llllllll}
S_{6}&=\Big\langle&(w,\otimes)_{60},&
(v,\odot)_{38},&
(u,\otimes)_{43},&
(x,\odot)_{58},&
(z,\otimes)_{39},&
(y,\otimes)_{52},\\ 
&&(y,\odot)_{8},&
(u,\odot)_{27},&
(v,\otimes)_{24},&
(x,\otimes)_{11},&
(z,\odot)_{36},&
(w,\odot)_{7} 
 \quad\Big\rangle
\end{array} $$

$$ \begin{array}{llllllll}
S_{7}&=\Big\langle&(v,\odot)_{60},&
(x,\odot)_{52},&
(y,\odot)_{39},&
(w,\otimes)_{42},&
(u,\odot)_{37},&
(z,\otimes)_{59},\\ 
&&(z,\odot)_{24},&
(y,\otimes)_{47},&
(w,\odot)_{11},&
(x,\otimes)_{33},&
(u,\otimes)_{54},&
(v,\otimes)_{21} 
 \quad\Big\rangle
\end{array} $$

$$ \begin{array}{llllllll}
S_{8}&=\Big\langle&(w,\odot)_{60},&
(y,\odot)_{40},&
(z,\odot)_{42},&
(x,\otimes)_{53},&
(u,\odot)_{35},&
(v,\otimes)_{59},\\ 
&&(v,\odot)_{9},&
(z,\otimes)_{25},&
(x,\odot)_{10},&
(y,\otimes)_{28},&
(u,\otimes)_{45},&
(w,\otimes)_{13} 
 \quad\Big\rangle
\end{array} $$

$$ \begin{array}{llllllll}
S_{9}&=\Big\langle&(x,\odot)_{60},&
(z,\odot)_{37},&
(v,\odot)_{53},&
(y,\otimes)_{43},&
(u,\odot)_{38},&
(w,\otimes)_{59},\\ 
&&(w,\odot)_{8},&
(v,\otimes)_{11},&
(y,\odot)_{17},&
(z,\otimes)_{44},&
(u,\otimes)_{33},&
(x,\otimes)_{12} 
 \quad\Big\rangle
\end{array} $$

$$ \begin{array}{llllllll}
S_{10}&=\Big\langle&(y,\odot)_{60},&
(v,\odot)_{35},&
(w,\odot)_{43},&
(z,\otimes)_{41},&
(u,\odot)_{52},&
(x,\otimes)_{59},\\ 
&&(x,\odot)_{16},&
(w,\otimes)_{10},&
(z,\odot)_{47},&
(v,\otimes)_{54},&
(u,\otimes)_{28},&
(y,\otimes)_{18} 
 \quad\Big\rangle
\end{array} $$

$$ \begin{array}{llllllll}
S_{11}&=\Big\langle&(z,\odot)_{60},&
(w,\odot)_{38},&
(x,\odot)_{41},&
(v,\otimes)_{39},&
(u,\odot)_{40},&
(y,\otimes)_{59},\\ 
&&(y,\odot)_{46},&
(x,\otimes)_{17},&
(v,\odot)_{25},&
(w,\otimes)_{45},&
(u,\otimes)_{44},&
(z,\otimes)_{48} 
 \quad\Big\rangle
\end{array} $$

$$ \begin{array}{llllllll}
S_{12}&=\Big\langle&(u,\otimes)_{60},&
(v,\otimes)_{52},&
(w,\otimes)_{40},&
(x,\otimes)_{37},&
(y,\otimes)_{35},&
(z,\otimes)_{38},\\ 
&&(z,\odot)_{34},&
(w,\odot)_{36},&
(v,\odot)_{51},&
(x,\odot)_{32},&
(y,\odot)_{27},&
(u,\odot)_{57} 
 \quad\Big\rangle
\end{array} $$

$$ \begin{array}{llllllll}
S_{13}&=\Big\langle&(u,\odot)_{59},&
(x,\odot)_{54},&
(w,\odot)_{44},&
(v,\odot)_{28},&
(z,\odot)_{33},&
(y,\odot)_{45},\\ 
&&(y,\otimes)_{30},&
(w,\otimes)_{26},&
(x,\otimes)_{55},&
(v,\otimes)_{31},&
(z,\otimes)_{29},&
(u,\otimes)_{57} 
 \quad\Big\rangle
\end{array} $$

$$ \begin{array}{llllllll}
S_{14}&=\Big\langle&(v,\odot)_{59},&
(x,\odot)_{43},&
(y,\odot)_{44},&
(w,\otimes)_{54},&
(u,\odot)_{39},&
(z,\otimes)_{56},\\ 
&&(z,\odot)_{11},&
(y,\otimes)_{21},&
(w,\odot)_{12},&
(x,\otimes)_{31},&
(u,\otimes)_{30},&
(v,\otimes)_{5} 
 \quad\Big\rangle
\end{array} $$

$$ \begin{array}{llllllll}
S_{15}&=\Big\langle&(w,\odot)_{59},&
(y,\odot)_{41},&
(z,\odot)_{54},&
(x,\otimes)_{45},&
(u,\odot)_{42},&
(v,\otimes)_{56},\\ 
&&(v,\odot)_{10},&
(z,\otimes)_{13},&
(x,\odot)_{18},&
(y,\otimes)_{26},&
(u,\otimes)_{29},&
(w,\otimes)_{4} 
 \quad\Big\rangle
\end{array} $$

$$ \begin{array}{llllllll}
S_{16}&=\Big\langle&(x,\odot)_{59},&
(z,\odot)_{39},&
(v,\odot)_{45},&
(y,\otimes)_{33},&
(u,\odot)_{53},&
(w,\otimes)_{56},\\ 
&&(w,\odot)_{17},&
(v,\otimes)_{12},&
(y,\odot)_{48},&
(z,\otimes)_{55},&
(u,\otimes)_{31},&
(x,\otimes)_{19} 
 \quad\Big\rangle
\end{array} $$

$$ \begin{array}{llllllll}
S_{17}&=\Big\langle&(y,\odot)_{59},&
(v,\odot)_{42},&
(w,\odot)_{33},&
(z,\otimes)_{28},&
(u,\odot)_{43},&
(x,\otimes)_{56},\\ 
&&(x,\odot)_{47},&
(w,\otimes)_{18},&
(z,\odot)_{21},&
(v,\otimes)_{30},&
(u,\otimes)_{26},&
(y,\otimes)_{49} 
 \quad\Big\rangle
\end{array} $$

$$ \begin{array}{llllllll}
S_{18}&=\Big\langle&(z,\odot)_{59},&
(w,\odot)_{53},&
(x,\odot)_{28},&
(v,\otimes)_{44},&
(u,\odot)_{41},&
(y,\otimes)_{56},\\ 
&&(y,\odot)_{25},&
(x,\otimes)_{48},&
(v,\odot)_{13},&
(w,\otimes)_{29},&
(u,\otimes)_{55},&
(z,\otimes)_{22} 
 \quad\Big\rangle
\end{array} $$

$$ \begin{array}{llllllll}
S_{19}&=\Big\langle&(u,\otimes)_{59},&
(v,\otimes)_{43},&
(w,\otimes)_{41},&
(x,\otimes)_{39},&
(y,\otimes)_{42},&
(z,\otimes)_{53},\\ 
&&(z,\odot)_{52},&
(w,\odot)_{37},&
(v,\odot)_{40},&
(x,\odot)_{35},&
(y,\odot)_{38},&
(u,\odot)_{58} 
 \quad\Big\rangle
\end{array} $$

$$ \begin{array}{llllllll}
S_{20}&=\Big\langle&(x,\otimes)_{58},&
(w,\odot)_{34},&
(u,\otimes)_{40},&
(y,\odot)_{57},&
(v,\otimes)_{35},&
(z,\otimes)_{51},\\ 
&&(z,\odot)_{6},&
(u,\odot)_{31},&
(w,\otimes)_{23},&
(y,\otimes)_{9},&
(v,\odot)_{30},&
(x,\odot)_{5} 
 \quad\Big\rangle
\end{array} $$

$$ \begin{array}{llllllll}
S_{21}&=\Big\langle&(y,\otimes)_{58},&
(x,\odot)_{51},&
(u,\otimes)_{37},&
(z,\odot)_{57},&
(w,\otimes)_{38},&
(v,\otimes)_{36},\\ 
&&(v,\odot)_{15},&
(u,\odot)_{26},&
(x,\otimes)_{7},&
(z,\otimes)_{8},&
(w,\odot)_{29},&
(y,\odot)_{4} 
 \quad\Big\rangle
\end{array} $$

$$ \begin{array}{llllllll}
S_{22}&=\Big\langle&(z,\otimes)_{58},&
(y,\odot)_{36},&
(u,\otimes)_{35},&
(v,\odot)_{57},&
(x,\otimes)_{52},&
(w,\otimes)_{32},\\ 
&&(w,\odot)_{50},&
(u,\odot)_{55},&
(v,\otimes)_{16},&
(y,\otimes)_{6},&
(x,\odot)_{31},&
(z,\odot)_{19} 
 \quad\Big\rangle
\end{array} $$

$$ \begin{array}{llllllll}
S_{23}&=\Big\langle&(v,\otimes)_{58},&
(z,\odot)_{32},&
(u,\otimes)_{38},&
(w,\odot)_{57},&
(y,\otimes)_{40},&
(x,\otimes)_{27},\\ 
&&(x,\odot)_{23},&
(u,\odot)_{30},&
(w,\otimes)_{46},&
(z,\otimes)_{15},&
(y,\odot)_{26},&
(v,\odot)_{49} 
 \quad\Big\rangle
\end{array} $$

$$ \begin{array}{llllllll}
S_{24}&=\Big\langle&(w,\otimes)_{58},&
(v,\odot)_{27},&
(u,\otimes)_{52},&
(x,\odot)_{57},&
(z,\otimes)_{37},&
(y,\otimes)_{34},\\ 
&&(y,\odot)_{7},&
(u,\odot)_{29},&
(v,\otimes)_{50},&
(x,\otimes)_{24},&
(z,\odot)_{55},&
(w,\odot)_{22} 
 \quad\Big\rangle
\end{array} $$

$$ \begin{array}{llllllll}
S_{25}&=\Big\langle&(u,\otimes)_{58},&
(v,\otimes)_{34},&
(w,\otimes)_{51},&
(x,\otimes)_{36},&
(y,\otimes)_{32},&
(z,\otimes)_{27},\\ 
&&(z,\odot)_{31},&
(w,\odot)_{55},&
(v,\odot)_{26},&
(x,\odot)_{30},&
(y,\odot)_{29},&
(u,\odot)_{56} 
 \quad\Big\rangle
\end{array} $$

$$ \begin{array}{llllllll}
S_{26}&=\Big\langle&(x,\otimes)_{57},&
(w,\odot)_{31},&
(u,\otimes)_{51},&
(y,\odot)_{56},&
(v,\otimes)_{32},&
(z,\otimes)_{26},\\ 
&&(z,\odot)_{5},&
(u,\odot)_{33},&
(w,\otimes)_{49},&
(y,\otimes)_{23},&
(v,\odot)_{54},&
(x,\odot)_{21} 
 \quad\Big\rangle
\end{array} $$

$$ \begin{array}{llllllll}
S_{27}&=\Big\langle&(y,\otimes)_{57},&
(x,\odot)_{26},&
(u,\otimes)_{36},&
(z,\odot)_{56},&
(w,\otimes)_{27},&
(v,\otimes)_{55},\\ 
&&(v,\odot)_{4},&
(u,\odot)_{28},&
(x,\otimes)_{22},&
(z,\otimes)_{7},&
(w,\odot)_{45},&
(y,\odot)_{13} 
 \quad\Big\rangle
\end{array} $$

$$ \begin{array}{llllllll}
S_{28}&=\Big\langle&(z,\otimes)_{57},&
(y,\odot)_{55},&
(u,\otimes)_{32},&
(v,\odot)_{56},&
(x,\otimes)_{34},&
(w,\otimes)_{30},\\ 
&&(w,\odot)_{19},&
(u,\odot)_{44},&
(v,\otimes)_{6},&
(y,\otimes)_{5},&
(x,\odot)_{33},&
(z,\odot)_{12} 
 \quad\Big\rangle
\end{array} $$

$$ \begin{array}{llllllll}
S_{29}&=\Big\langle&(v,\otimes)_{57},&
(z,\odot)_{30},&
(u,\otimes)_{27},&
(w,\odot)_{56},&
(y,\otimes)_{51},&
(x,\otimes)_{29},\\ 
&&(x,\odot)_{49},&
(u,\odot)_{54},&
(w,\otimes)_{15},&
(z,\otimes)_{4},&
(y,\odot)_{28},&
(v,\odot)_{18} 
 \quad\Big\rangle
\end{array} $$

$$ \begin{array}{llllllll}
S_{30}&=\Big\langle&(w,\otimes)_{57},&
(v,\odot)_{29},&
(u,\otimes)_{34},&
(x,\odot)_{56},&
(z,\otimes)_{36},&
(y,\otimes)_{31},\\ 
&&(y,\odot)_{22},&
(u,\odot)_{45},&
(v,\otimes)_{19},&
(x,\otimes)_{50},&
(z,\odot)_{44},&
(w,\odot)_{48} 
 \quad\Big\rangle
\end{array} $$

$$ \begin{array}{llllllll}
S_{31}&=\Big\langle&(y,\otimes)_{55},&
(x,\odot)_{29},&
(u,\otimes)_{50},&
(z,\odot)_{45},&
(w,\otimes)_{36},&
(v,\otimes)_{48},\\ 
&&(v,\odot)_{7},&
(u,\odot)_{13},&
(x,\otimes)_{20},&
(z,\otimes)_{24},&
(w,\odot)_{25},&
(y,\odot)_{3} 
 \quad\Big\rangle
\end{array} $$

$$ \begin{array}{llllllll}
S_{32}&=\Big\langle&(w,\otimes)_{55},&
(v,\odot)_{22},&
(u,\otimes)_{19},&
(x,\odot)_{45},&
(z,\otimes)_{50},&
(y,\otimes)_{44},\\ 
&&(y,\odot)_{20},&
(u,\odot)_{25},&
(v,\otimes)_{17},&
(x,\otimes)_{14},&
(z,\odot)_{53},&
(w,\odot)_{46} 
 \quad\Big\rangle
\end{array} $$

$$ \begin{array}{llllllll}
S_{33}&=\Big\langle&(v,\odot)_{55},&
(x,\odot)_{44},&
(y,\odot)_{50},&
(w,\otimes)_{31},&
(u,\odot)_{48},&
(z,\otimes)_{34},\\ 
&&(z,\odot)_{17},&
(y,\otimes)_{12},&
(w,\odot)_{14},&
(x,\otimes)_{16},&
(u,\otimes)_{6},&
(v,\otimes)_{2} 
 \quad\Big\rangle
\end{array} $$

$$ \begin{array}{llllllll}
S_{34}&=\Big\langle&(x,\odot)_{55},&
(z,\odot)_{48},&
(v,\odot)_{36},&
(y,\otimes)_{19},&
(u,\odot)_{22},&
(w,\otimes)_{34},\\ 
&&(w,\odot)_{20},&
(v,\otimes)_{14},&
(y,\odot)_{24},&
(z,\otimes)_{52},&
(u,\otimes)_{16},&
(x,\otimes)_{47} 
 \quad\Big\rangle
\end{array} $$

$$ \begin{array}{llllllll}
S_{35}&=\Big\langle&(x,\otimes)_{54},&
(w,\odot)_{21},&
(u,\otimes)_{18},&
(y,\odot)_{43},&
(v,\otimes)_{49},&
(z,\otimes)_{42},\\ 
&&(z,\odot)_{20},&
(u,\odot)_{24},&
(w,\otimes)_{16},&
(y,\otimes)_{14},&
(v,\odot)_{52},&
(x,\odot)_{50} 
 \quad\Big\rangle
\end{array} $$

$$ \begin{array}{llllllll}
S_{36}&=\Big\langle&(z,\otimes)_{54},&
(y,\odot)_{33},&
(u,\otimes)_{49},&
(v,\odot)_{43},&
(x,\otimes)_{30},&
(w,\otimes)_{47},\\ 
&&(w,\odot)_{5},&
(u,\odot)_{11},&
(v,\otimes)_{23},&
(y,\otimes)_{20},&
(x,\odot)_{24},&
(z,\odot)_{3} 
 \quad\Big\rangle
\end{array} $$

$$ \begin{array}{llllllll}
S_{37}&=\Big\langle&(w,\odot)_{54},&
(y,\odot)_{42},&
(z,\odot)_{49},&
(x,\otimes)_{28},&
(u,\odot)_{47},&
(v,\otimes)_{26},\\ 
&&(v,\odot)_{16},&
(z,\otimes)_{10},&
(x,\odot)_{14},&
(y,\otimes)_{15},&
(u,\otimes)_{4},&
(w,\otimes)_{2} 
 \quad\Big\rangle
\end{array} $$

$$ \begin{array}{llllllll}
S_{38}&=\Big\langle&(y,\odot)_{54},&
(v,\odot)_{47},&
(w,\odot)_{30},&
(z,\otimes)_{18},&
(u,\odot)_{21},&
(x,\otimes)_{26},\\ 
&&(x,\odot)_{20},&
(w,\otimes)_{14},&
(z,\odot)_{23},&
(v,\otimes)_{51},&
(u,\otimes)_{15},&
(y,\otimes)_{46} 
 \quad\Big\rangle
\end{array} $$

$$ \begin{array}{llllllll}
S_{39}&=\Big\langle&(y,\otimes)_{53},&
(x,\odot)_{25},&
(u,\otimes)_{17},&
(z,\odot)_{40},&
(w,\otimes)_{48},&
(v,\otimes)_{38},\\ 
&&(v,\odot)_{20},&
(u,\odot)_{23},&
(x,\otimes)_{15},&
(z,\otimes)_{14},&
(w,\odot)_{51},&
(y,\odot)_{49} 
 \quad\Big\rangle
\end{array} $$

$$ \begin{array}{llllllll}
S_{40}&=\Big\langle&(v,\otimes)_{53},&
(z,\odot)_{41},&
(u,\otimes)_{48},&
(w,\odot)_{40},&
(y,\otimes)_{45},&
(x,\otimes)_{46},\\ 
&&(x,\odot)_{13},&
(u,\odot)_{9},&
(w,\otimes)_{22},&
(z,\otimes)_{20},&
(y,\odot)_{23},&
(v,\odot)_{3} 
 \quad\Big\rangle
\end{array} $$

$$ \begin{array}{llllllll}
S_{41}&=\Big\langle&(x,\odot)_{53},&
(z,\odot)_{38},&
(v,\odot)_{48},&
(y,\otimes)_{39},&
(u,\odot)_{46},&
(w,\otimes)_{44},\\ 
&&(w,\odot)_{15},&
(v,\otimes)_{8},&
(y,\odot)_{14},&
(z,\otimes)_{19},&
(u,\otimes)_{12},&
(x,\otimes)_{2} 
 \quad\Big\rangle
\end{array} $$

$$ \begin{array}{llllllll}
S_{42}&=\Big\langle&(w,\otimes)_{52},&
(v,\odot)_{37},&
(u,\otimes)_{47},&
(x,\odot)_{36},&
(z,\otimes)_{43},&
(y,\otimes)_{50},\\ 
&&(y,\odot)_{11},&
(u,\odot)_{7},&
(v,\otimes)_{20},&
(x,\otimes)_{21},&
(z,\odot)_{22},&
(w,\odot)_{3} 
 \quad\Big\rangle
\end{array} $$

$$ \begin{array}{llllllll}
S_{43}&=\Big\langle&(y,\odot)_{52},&
(v,\odot)_{34},&
(w,\odot)_{47},&
(z,\otimes)_{35},&
(u,\odot)_{50},&
(x,\otimes)_{42},\\ 
&&(x,\odot)_{19},&
(w,\otimes)_{6},&
(z,\odot)_{14},&
(v,\otimes)_{18},&
(u,\otimes)_{10},&
(y,\otimes)_{2} 
 \quad\Big\rangle
\end{array} $$

$$ \begin{array}{llllllll}
S_{44}&=\Big\langle&(x,\otimes)_{51},&
(w,\odot)_{32},&
(u,\otimes)_{46},&
(y,\odot)_{30},&
(v,\otimes)_{40},&
(z,\otimes)_{49},\\ 
&&(z,\odot)_{9},&
(u,\odot)_{5},&
(w,\otimes)_{20},&
(y,\otimes)_{25},&
(v,\odot)_{21},&
(x,\odot)_{3} 
 \quad\Big\rangle
\end{array} $$

$$ \begin{array}{llllllll}
S_{45}&=\Big\langle&(z,\odot)_{51},&
(w,\odot)_{26},&
(x,\odot)_{46},&
(v,\otimes)_{27},&
(u,\odot)_{49},&
(y,\otimes)_{38},\\ 
&&(y,\odot)_{18},&
(x,\otimes)_{4},&
(v,\odot)_{14},&
(w,\otimes)_{17},&
(u,\otimes)_{8},&
(z,\otimes)_{2} 
 \quad\Big\rangle
\end{array} $$

$$ \begin{array}{llllllll}
S_{46}&=\Big\langle&(w,\otimes)_{50},&
(v,\odot)_{24},&
(u,\otimes)_{14},&
(x,\odot)_{22},&
(z,\otimes)_{47},&
(y,\otimes)_{48},\\ 
&&(y,\odot)_{21},&
(u,\odot)_{3},&
(v,\otimes)_{46},&
(x,\otimes)_{49},&
(z,\odot)_{25},&
(w,\odot)_{23} 
 \quad\Big\rangle
\end{array} $$

$$ \begin{array}{llllllll}
S_{47}&=\Big\langle&(v,\odot)_{50},&
(x,\odot)_{48},&
(y,\odot)_{47},&
(w,\otimes)_{19},&
(u,\odot)_{20},&
(z,\otimes)_{16},\\ 
&&(z,\odot)_{46},&
(y,\otimes)_{17},&
(w,\odot)_{49},&
(x,\otimes)_{18},&
(u,\otimes)_{2},&
(v,\otimes)_{15} 
 \quad\Big\rangle
\end{array} $$

$$ \begin{array}{llllllll}
S_{48}&=\Big\langle&(v,\otimes)_{45},&
(z,\odot)_{28},&
(u,\otimes)_{22},&
(w,\odot)_{41},&
(y,\otimes)_{29},&
(x,\otimes)_{25},\\ 
&&(x,\odot)_{4},&
(u,\odot)_{10},&
(w,\otimes)_{7},&
(z,\otimes)_{3},&
(y,\odot)_{9},&
(v,\odot)_{1} 
 \quad\Big\rangle
\end{array} $$

$$ \begin{array}{llllllll}
S_{49}&=\Big\langle&(v,\odot)_{44},&
(x,\odot)_{39},&
(y,\odot)_{19},&
(w,\otimes)_{33},&
(u,\odot)_{17},&
(z,\otimes)_{31},\\ 
&&(z,\odot)_{8},&
(y,\otimes)_{11},&
(w,\odot)_{2},&
(x,\otimes)_{6},&
(u,\otimes)_{5},&
(v,\otimes)_{1} 
 \quad\Big\rangle
\end{array} $$

$$ \begin{array}{llllllll}
S_{50}&=\Big\langle&(w,\otimes)_{43},&
(v,\odot)_{39},&
(u,\otimes)_{21},&
(x,\odot)_{37},&
(z,\otimes)_{33},&
(y,\otimes)_{24},\\ 
&&(y,\odot)_{12},&
(u,\odot)_{8},&
(v,\otimes)_{3},&
(x,\otimes)_{5},&
(z,\odot)_{7},&
(w,\odot)_{1} 
 \quad\Big\rangle
\end{array} $$

$$ \begin{array}{llllllll}
S_{51}&=\Big\langle&(w,\odot)_{42},&
(y,\odot)_{35},&
(z,\odot)_{18},&
(x,\otimes)_{41},&
(u,\odot)_{16},&
(v,\otimes)_{28},\\ 
&&(v,\odot)_{6},&
(z,\otimes)_{9},&
(x,\odot)_{2},&
(y,\otimes)_{4},&
(u,\otimes)_{13},&
(w,\otimes)_{1} 
 \quad\Big\rangle
\end{array} $$

$$ \begin{array}{llllllll}
S_{52}&=\Big\langle&(v,\otimes)_{41},&
(z,\odot)_{10},&
(u,\otimes)_{25},&
(w,\odot)_{35},&
(y,\otimes)_{13},&
(x,\otimes)_{40},\\ 
&&(x,\odot)_{1},&
(u,\odot)_{6},&
(w,\otimes)_{3},&
(z,\otimes)_{23},&
(y,\odot)_{32},&
(v,\odot)_{5} 
 \quad\Big\rangle
\end{array} $$

$$ \begin{array}{llllllll}
S_{53}&=\Big\langle&(w,\otimes)_{39},&
(v,\odot)_{17},&
(u,\otimes)_{11},&
(x,\odot)_{38},&
(z,\otimes)_{12},&
(y,\otimes)_{37},\\ 
&&(y,\odot)_{2},&
(u,\odot)_{15},&
(v,\otimes)_{7},&
(x,\otimes)_{1},&
(z,\odot)_{27},&
(w,\odot)_{4} 
 \quad\Big\rangle
\end{array} $$

$$ \begin{array}{llllllll}
S_{54}&=\Big\langle&(w,\otimes)_{37},&
(v,\odot)_{8},&
(u,\otimes)_{24},&
(x,\odot)_{27},&
(z,\otimes)_{11},&
(y,\otimes)_{36},\\ 
&&(y,\odot)_{1},&
(u,\odot)_{4},&
(v,\otimes)_{22},&
(x,\otimes)_{3},&
(z,\odot)_{29},&
(w,\odot)_{13} 
 \quad\Big\rangle
\end{array} $$

$$ \begin{array}{llllllll}
S_{55}&=\Big\langle&(x,\otimes)_{35},&
(w,\odot)_{16},&
(u,\otimes)_{9},&
(y,\odot)_{34},&
(v,\otimes)_{10},&
(z,\otimes)_{32},\\ 
&&(z,\odot)_{2},&
(u,\odot)_{19},&
(w,\otimes)_{5},&
(y,\otimes)_{1},&
(v,\odot)_{31},&
(x,\odot)_{12} 
 \quad\Big\rangle
\end{array} $$

$$ \begin{array}{llllllll}
S_{56}&=\Big\langle&(v,\odot)_{33},&
(x,\odot)_{11},&
(y,\odot)_{31},&
(w,\otimes)_{21},&
(u,\odot)_{12},&
(z,\otimes)_{30},\\ 
&&(z,\odot)_{1},&
(y,\otimes)_{3},&
(w,\odot)_{6},&
(x,\otimes)_{32},&
(u,\otimes)_{23},&
(v,\otimes)_{9} 
 \quad\Big\rangle
\end{array} $$

$$ \begin{array}{llllllll}
S_{57}&=\Big\langle&(v,\otimes)_{29},&
(z,\odot)_{26},&
(u,\otimes)_{7},&
(w,\odot)_{28},&
(y,\otimes)_{27},&
(x,\otimes)_{13},\\ 
&&(x,\odot)_{15},&
(u,\odot)_{18},&
(w,\otimes)_{8},&
(z,\otimes)_{1},&
(y,\odot)_{10},&
(v,\odot)_{2} 
 \quad\Big\rangle
\end{array} $$

$$ \begin{array}{llllllll}
S_{58}&=\Big\langle&(v,\otimes)_{25},&
(z,\odot)_{13},&
(u,\otimes)_{20},&
(w,\odot)_{9},&
(y,\otimes)_{22},&
(x,\otimes)_{23},\\ 
&&(x,\odot)_{7},&
(u,\odot)_{1},&
(w,\otimes)_{24},&
(z,\otimes)_{21},&
(y,\odot)_{5},&
(v,\odot)_{11} 
 \quad\Big\rangle
\end{array} $$

$$ \begin{array}{llllllll}
S_{59}&=\Big\langle&(v,\odot)_{19},&
(x,\odot)_{17},&
(y,\odot)_{16},&
(w,\otimes)_{12},&
(u,\odot)_{14},&
(z,\otimes)_{6},\\ 
&&(z,\odot)_{15},&
(y,\otimes)_{8},&
(w,\odot)_{18},&
(x,\otimes)_{10},&
(u,\otimes)_{1},&
(v,\otimes)_{4} 
 \quad\Big\rangle
\end{array} $$

$$ \begin{array}{llllllll}
S_{60}&=\Big\langle&(v,\otimes)_{13},&
(z,\odot)_{4},&
(u,\otimes)_{3},&
(w,\odot)_{10},&
(y,\otimes)_{7},&
(x,\otimes)_{9},\\ 
&&(x,\odot)_{8},&
(u,\odot)_{2},&
(w,\otimes)_{11},&
(z,\otimes)_{5},&
(y,\odot)_{6},&
(v,\odot)_{12} 
 \quad\Big\rangle
\end{array} $$
                           
\end{document}